\documentclass[reqno,envcountsame]{llncs}

\usepackage[T1]{fontenc}
\usepackage[latin1]{inputenc}
\usepackage{pgf}

\usepackage{amssymb}
\usepackage{amsmath}

\usepackage{amsthm}
\usepackage{fancyhdr}
\usepackage{setspace}
\usepackage{tikz}
\usepackage{here}
\usepackage{paralist} 
\usetikzlibrary{snakes}
\usetikzlibrary{arrows}

\usepackage{algorithm}

\addtolength{\voffset}{-0.85in}
\addtolength{\textheight}{4.7cm}
\addtolength{\hoffset}{-0.45in}   
\addtolength{\textwidth}{1in}  



\usepackage{enumerate}
\def\ve#1{\mathchoice{\mbox{\boldmath$\displaystyle\bf#1$}}
{\mbox{\boldmath$\textstyle\bf#1$}}
{\mbox{\boldmath$\scriptstyle\bf#1$}}
{\mbox{\boldmath$\scriptscriptstyle\bf#1$}}}
\newcommand\vea{{\ve a}}
\newcommand\veb{{\ve b}}

\newcommand\vece{{\ve e}}

\newcommand\veg{{\ve g}}
\newcommand\veh{{\ve h}}

\newcommand\vep{{\ve p}}

\newcommand\ver{{\ve r}}

\newcommand\veu{{\ve u}}
\newcommand\vev{{\ve v}}
\newcommand\vew{{\ve w}}
\newcommand\vex{{\ve x}}
\newcommand\vey{{\ve y}}
\newcommand\vez{{\ve z}}
\newcommand\zero{{\ve 0}}

\newcommand\red[1]{\textcolor{red}{#1}}

\def\Circuits{{\mathcal C}}
\def\Cir{{\mathcal C}}
\def\Edges{{\mathcal E}}
\def\Edg{{\mathcal E}}
\def\Feas{{\mathcal F}}
\def\diamp{\Delta}
\def\Deltae{\Delta_{\Edg}}
\def\Deltac{\Delta_{\Cir}}
\def\diamfd{\Delta}

\DeclareMathOperator{\supp}{supp}

\newcommand{\N}{{\mathbb N}}

\newcommand{\R}{\mathbb R}

\newcommand\Csimple{$\mathcal{C}$-simple}
\newcommand\Csimplicity{$\mathcal{C}$-simplicity}

\newcommand\pmat[1]{\begin{pmatrix} #1 \end{pmatrix}}


\begin{document}

\title{On the Circuit Diameter Conjecture}
\author{Steffen Borgwardt\inst{1} \and Tamon Stephen\inst{2} \and Timothy Yusun\inst{2}}

\pagestyle{plain}

\institute{University of Colorado Denver \email{steffen.borgwardt@ucdenver.edu} \and Simon Fraser University \email{\{tamon,tyusun\}@sfu.ca}}

\maketitle

\begin{abstract}
From the point of view of optimization, a critical issue is relating the 
combinatorial diameter of a polyhedron to its number of facets $f$ and dimension $d$.
In the seminal paper of Klee and Walkup~\cite{kw-67}, the Hirsch conjecture of an
upper bound of $f-d$ was shown to be equivalent to several seemingly simpler statements, 
and was disproved for unbounded polyhedra through the construction of
a particular 4-dimensional polyhedron $U_4$ with 8 facets.  The Hirsch bound 
for bounded polyhedra was only recently disproved by Santos~\cite{s-11}.

We consider analogous properties for a variant of the combinatorial
diameter called the \emph{circuit} diameter. In this variant, the walks are built
from the circuit directions of the polyhedron, which are the minimal non-trivial
solutions to the system defining the polyhedron. 

We are able to prove that circuit variants of the so-called non-revisiting conjecture and $d$-step conjecture both imply the circuit analogue of the Hirsch conjecture. For the equivalences in
\cite{kw-67}, the wedge construction was a fundamental proof technique. We exhibit why it is not available in the circuit setting, and what are the implications of losing it as a tool. 

Further, 
we show the circuit analogue of the non-revisiting conjecture 
implies a linear bound on the circuit diameter of all unbounded polyhedra -- 
in contrast to what is known for the combinatorial diameter. 
Finally, we give two proofs of a circuit version of the $4$-step conjecture. 
These results offer some hope that the circuit version of the Hirsch conjecture 
may hold, even for unbounded polyhedra. 

A challenge in the circuit setting is that different realizations of polyhedra 
with the same combinatorial structure may have different diameters. 
We adapt the notion of simplicity to work with circuits in 
the form of {\em \Csimple} and {\em wedge-simple} polyhedra. We show that it 
suffices to consider such polyhedra for studying circuit analogues of the Hirsch
conjecture. 
\end{abstract}

\noindent{\bf MSC[2012]:} 52B05, 90C05 \\


\section{Introduction}


The \emph{combinatorial diameter} of a polyhedron is the maximum number of edges that is necessary to connect 
any pair of vertices by a walk.  It has been studied extensively due to its intimate connection to the 
simplex algorithm for linear programming: it is a lower bound on the best-case performance of the simplex algorithm, 
independent of the pivot rule used.  This motivates the question: {\em What is the largest possible 
combinatorial diameter of a $d$-dimensional convex polyhedron with a given number $f$ of facets?} 
In particular, if there is a family of polyhedra with combinatorial diameter that is exponential 
in $f$ and $d$, then a polynomial pivot rule cannot exist.

The quantity $f-d$, conjectured by Hirsch in the late 1950s~\cite{d-63} as a prospective
upper bound for the combinatorial diameter, remains a landmark in discussion of the combinatorial
diameter.  It is tight for the key special cases of $d$-cubes and $d$-simplices.
It is known to hold for important classes of polyhedra including $0/1$-polytopes \cite{n-89} and 
network flow polytopes \cite{bdf-16b}, but does not hold in general. For unbounded 
polyhedra, there is a $4$-dimensional counterexample \cite{kw-67} and for bounded polytopes a 
$20$-dimensional counterexample \cite{msw-15} (the original counterexample was of dimension $43$ \cite{s-11}). 
These counterexamples can be used to generate families of counterexamples in higher dimension, but it
is otherwise difficult to generate exceptions.
Indeed, the known counterexamples only give rise to a violation of the stated bound by 
a factor of at most $\frac{5}{4}$ for unbounded polyhedra and $\frac{21}{20}$ for bounded polytopes. 
It is open whether these bounds can be exceeded. See \cite{ks-10} for the state-of-the-art and \cite{bdhs-13} for some recent progress for low values of $f$ and $d$.



Recent avenues of research consider alternative models in an effort to understand 
\emph{why} the combinatorial diameter violates the Hirsch bound.  These include
working with combinatorial abstractions of polytopes, see e.g.~\cite{ehrr-10,lms-15}, 
and augmented pivoting procedures that are not limited to the vertices and edges of
the polytope, see e.g.~\cite{bfh-14,bdf-16,bdfm-16,dhl-15}.  We follow the latter path, focusing
on the model of \emph{circuit walks}, arguably the most natural of the augmented 
procedures.


\subsection{Circuit walks}

We assume that a polyhedron $P$ is given by an irredundant representation as full-dimensional polyhedron in $\R^d$. 
Let \[P=\{\vez\in \R^d: A \vez\geq \veb\}\]
for matrix  $A\in\R^{f\times d}$. The \emph{circuits} $\Circuits(A)$ of $A$ are the normalized vectors  $\veg$ for which $A\veg$ is support-minimal in  $\left\{A\vex: \ \vex \in \R^d \backslash \{0\}\right\}$. The normalization (with respect to any norm) gives a unique representative modulo multiplication with a positive constant. The circuits correspond to the so-called elementary vectors as introduced in \cite{r-69}. Note the set of circuits contains the edge directions of the polyhedron $P$. 

If needed, non-full-dimensional polyhedra can be represented in the form $P= \{\, \vez\in \R^{d'}:\ A^1\vez=\veb^1, A^2\vez\geq \veb^2 \,\}$  for matrices $A^i\in\R^{f_i\times d'}$ and vectors $\veb^i\in \R^{f_i}$, $i=1,2$.
In this form, the \emph{circuits} $\Circuits(A^1,A^2)$ of $A^1$ and $A^2$ are those (normalized) vectors $\veg \in \ker(A^1)\setminus\{\,\ve 0\,\}$, for which $A^2\veg$ is support-minimal in  $\left\{A^2\vex: \ \vex \in \ker\left(A^1\right)\backslash \{0\}\right\}$. 

We use $\mathcal{C}(P)$ to refer to the set of circuits of $P$ without explicit consideration of the underlying matrices.
Following Borgwardt et al.~\cite{bfh-14}, for two vertices $\vev^{(1)},\vev^{(2)}$ of $P$, we call a sequence $\vev^{(1)}=\vey^{(0)},\ldots,\vey^{(k)}=\vev^{(2)}$ a \emph{circuit walk of length $k$} if for all $i=0,\ldots,k-1$ we have
\begin{enumerate}
\item $\vey^{(i)}\in P$,
\item $\vey^{(i+1)}-\vey^{(i)}=\alpha_i\veg^{(i)}$ for some $\veg^{(i)}\in\Circuits(P)$  and $\alpha_i>0$, and
\item $\vey^{(i)}+\alpha\veg^{(i)}$ is infeasible for all $\alpha>\alpha_i$. 
\end{enumerate}  
Informally, a circuit walk of length $k$ takes $k$ steps of maximal length along circuits of $P$. The above properties are also well-defined when $\vev^{(1)}=\vey^{(0)}$ is not a vertex or when $\vev^{(2)}=\vey^{(k)}$ is not a vertex. In fact, we will sometimes use the terms in this more general sense. If this is the case, we will explicitly state that the walks at hand may start or end at a non-vertex. Note that properties $2.$ and $3.$ rule out the use of a circuit in the recession cone of $P$, which would give an unbounded step.

The \emph{circuit distance} from $\vev^{(1)}$ to $\vev^{(2)}$ then is the minimum number of steps of a circuit walk from $\vev^{(1)}$ to $\vev^{(2)}$. The \emph{circuit diameter} $\diamp_\Circuits(P)$ of $P$ is the maximum circuit distance between any two vertices of $P$. We denote the maximum circuit diameter that is realizable in the set of $d$-dimensional polyhedra with $f$ facets as $\diamfd_\Circuits(f,d)$. For the maximum combinatorial diameter in this class of polyhedra, we use $\diamfd_\Edges(f,d)$. Additionally, we use the terms $\diamp_\Feas(P)$, and $\diamfd_\Feas(f,d)$ for the corresponding notions for the so-called {\em feasible circuit walks} \cite{bdf-16}, where the walk does not have to take steps of maximal length, but is only required to stay feasible. To distinguish these walks from the original circuit walks, the original ones are sometimes called {\em maximal circuit walks}.

In \cite{bdf-16}, several concepts of walking along circuits with respect to different restrictions are compared, giving a hierarchy of circuit distances as well as diameters. We would particularly
like to understand the part of this hierarchy involving $\diamp_\Edges(P)$, $\diamp_\Circuits(P)$, and $\diamp_\Feas(P)$. Note that all edge walks are special (maximal) circuit walks; the restriction is to use only actual edges instead of any circuits for the directions of the steps. Further all maximal circuit walks are feasible walks; the restriction is to only use maximal step lengths. This gives the relation
 $$\diamfd_\Edges(f,d) \geq \diamfd_\Circuits(f,d) \geq \diamfd_\Feas(f,d).$$

Recall that the Hirsch conjecture (for the combinatorial diameter) is false, i.e. $\diamfd_\Edges(f,d)$ does violate the Hirsch bound $f-d$ \cite{kw-67,s-11}.
In contrast, it is possible to show $\diamfd_\Feas(f,d)\leq f-d$ for all $f,d$ \cite{bdf-16}.
This means that either between $\diamfd_\Edges(f,d)$ and $\diamfd_\Circuits(f,d)$, or between $\diamfd_\Circuits(f,d)$ and $\diamfd_\Feas(f,d)$ we lose validity of the bound $f-d$.
For the middle part of this hierarchy, i.e. the (maximal) circuit walks themselves, the corresponding claim is open for bounded and for unbounded polyhedra:

\conjecture[Circuit Diameter Conjecture -- Original formulation \cite{bfh-14}]\label{original}
{
 	\\For any $d$-dimensional polyhedron with $f$ facets the circuit diameter is bounded above by $f-d$.\\
}

Suppose Conjecture \ref{original} is true -- then there is a significant conceptual difference between walking along edges and walking along any circuits. If Conjecture~\ref{original} is not true, there is a significant
conceptual difference between taking steps of maximal length and just staying feasible. In fact, the distinction may be even finer; it could a priori occur that $\diamfd_\Edges(f,d) >
\diamfd_\Circuits(f,d) > \diamfd_\Feas(f,d)$. 

This is one of the many incentives to study Conjecture \ref{original} and leads to an interpretation as investigating why the Hirsch bound is violated for the combinatorial diameter. 

\subsection{Our contributions}

In their seminal paper \cite{kw-67}, Klee and Walkup gave several insights related
to the Hirsch bound for polyhedral diameter.  
They showed that it is enough to work 
with simple polyhedra, and showed that the general Hirsch conjecture is equivalent
to three more restricted statements.  These include the \emph{$d$-step conjecture},
which is the special case where $f = 2d$, and the \emph{non-revisiting conjecture},
that any two vertices can be joined by a walk that visits each facet at most once.
They then exhibited an unbounded 4-dimensional polyhedron $U_4$ with 8 facets that
has combinatorial diameter 5, disproving these results for unbounded polyhedra.
However, restricting attention to bounded polytopes, the equivalences again hold,
and here they show that, in contrast to the unbounded case, the $d$-step
conjecture \emph{does} hold at $d=4$ and $d=5$.  Polyhedra based on $U_4$ remained
the only known non-Hirsch polyhedra for more than four decades until
Santos~\cite{s-11} found counterexamples to the bounded versions of the conjectures, 
refuting $d$-step at $d=43$ and subsequently~\cite{msw-15} at $d=20$, the current record.


In this paper, we study the analogous questions in the context of circuit diameter.

One of the main challenges we face is that in the circuit context, different geometric realizations
of polyhedra with the same combinatorial structure may have different circuit diameters; 
see e.g. the example in \cite{bfh-14}. 
Thus we need to introduce a notion of simplicity which depends on the geometry of the problem,
including the circuits.  We call polyhedra which satisfy this property {\em \Csimple}, and show:
\begin{lemma}\label{csimple}
Let $P$ be a polyhedron. Then there is a \Csimple\ polyhedron $P'$ in the same dimension and with the same number of facets with $\diamp_\Circuits(P)\leq \diamp_\Circuits(P').$
\end{lemma}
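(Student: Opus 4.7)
The plan is to follow the classical Klee--Walkup strategy of reducing to simple polyhedra via a small perturbation, adapted carefully to the circuit setting. A crucial observation is that the set $\Circuits(A^1,A^2)$ is determined entirely by the constraint matrices, so perturbing only the right-hand side leaves the pool of available circuits unchanged. I would therefore define
\[
P^\epsilon := \{\vez : A^1\vez = \veb^1,\ A^2\vez \geq \veb^2 + \epsilon\ved\}
\]
for a generic direction $\ved \in \R^{f_2}$ and small $\epsilon > 0$, and set $P' := P^\epsilon$. A short genericity argument then shows that for generic $\ved$ and all sufficiently small $\epsilon$, the polyhedron $P^\epsilon$ has the same dimension $d$, the same number of facets $f$, and is $\mathcal{C}$-simple: the last property amounts to avoiding finitely many algebraic incidences between vertices and circuit directions, each of which fails only on a measure-zero subset of perturbations.

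It remains to show $\diamp_\Circuits(P) \leq \diamp_\Circuits(P^\epsilon)$ for all small $\epsilon$. I would fix vertices $\vev^{(1)}, \vev^{(2)} \in P$ achieving the diameter and, for each small $\epsilon$, pick vertices $\vev^{(1),\epsilon}, \vev^{(2),\epsilon}$ of $P^\epsilon$ converging to them as $\epsilon \to 0$ (a non-simple vertex of $P$ typically splits into several vertices of $P^\epsilon$, any of which will do). Arguing by contradiction, suppose that along some sequence $\epsilon_n \to 0$ there exist circuit walks of length $k < \diamp_\Circuits(P)$ in $P^{\epsilon_n}$ joining the corresponding vertex pairs. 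Because $\Circuits(A^1,A^2)$ is finite and $\epsilon$-independent, I would pass to a subsequence on which the circuit directions used in the walks stabilize to a common sequence $\veg^{(0)}, \dots, \veg^{(k-1)}$ and the intermediate points converge to some $\vey^{(0)} = \vev^{(1)}, \dots, \vey^{(k)} = \vev^{(2)}$ in $P$.

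The main obstacle is that this limit sequence need not yet be a valid circuit walk in $P$: feasibility (condition 1) and the correct circuit direction (condition 2) pass to the limit, but the maximal-step-length requirement (condition 3) can fail, since at a non-simple vertex of $P$ the blocking facet may differ from its perturbed counterpart in $P^{\epsilon_n}$. To repair this, I would replace any non-maximal step by its maximal extension in $P$, which lands at some point $\vez$ on the boundary of $P$; using that $P^{\epsilon_n}$ is $\mathcal{C}$-simple near each $\vey^{(i)}$ for large $n$ and that $\mathcal{C}(P) = \mathcal{C}(P^{\epsilon_n})$, the remaining suffix of the walk can be rewritten from $\vez$ onward without increasing its length, absorbing the overshoot into subsequent steps. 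This yields a circuit walk in $P$ of length at most $k$ from $\vev^{(1)}$ to $\vev^{(2)}$, contradicting $k < \diamp_\Circuits(P)$. Making the rerouting argument precise -- and verifying that the step-length discontinuities at non-simple vertices can always be absorbed -- is the technical heart of the proof.
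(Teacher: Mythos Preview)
Your overall strategy matches the paper's: perturb only the right-hand side so that $\Circuits(A^1,A^2)$ is unchanged, observe that a generic perturbation yields a \Csimple\ polyhedron of the same dimension and facet count, and then argue that short circuit walks in the perturbed polyhedron induce short circuit walks in $P$. Where you diverge is in the mechanics of that last step.

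The paper does \emph{not} take a limit $\epsilon_n\to 0$. It fixes a single sufficiently small perturbation and sets up a direct step-by-step correspondence: for a point $\vey\in P$ and a nearby $\vez\in P'$, a circuit $\vecc$ lies either in $I(\vey)\cap I(\vez)$ (both points advance to nearby new points) or in $I(\vez)\setminus I(\vey)$ (the $P'$-step is short and $\vez'$ stays near the same $\vey$). Thus every walk in $P'$ projects to a walk in $P$ of no greater length, with the ``extra'' steps simply collapsed.

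Your limiting argument can be made to work, but you have misdiagnosed the obstacle. If $\veg^{(i)}$ is feasible at the limit point $\vey^{(i)}$, then a direct computation of the maximal step length $\alpha^{(i),n}=\min_{j:\,a_j^T\veg^{(i)}<0}(a_j^T\vey^{(i),n}-b_j^{\epsilon_n})/(-a_j^T\veg^{(i)})$ shows it converges to the maximal step length in $P$; the limit step \emph{is} maximal. The only way condition~3 fails is that some active facet at $\vey^{(i)}$ blocks $\veg^{(i)}$ entirely, forcing $\alpha^{(i)}=0$ and hence $\vey^{(i+1)}=\vey^{(i)}$. So the correct repair is trivial: delete the zero-length steps. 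Your proposed ``extend and absorb the overshoot'' manoeuvre is not needed, and as stated it is also not justified --- you give no reason why the suffix can be rewritten from the new point $\vez$ without increasing length. Replacing that paragraph with the observation above would complete your argument cleanly.
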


In fact, we prove that it suffices to consider polyhedra $P$ with a slightly stronger property that also requires $P$ to remain \Csimple\ under repeated wedge operations. 
We call this \emph{$k$-wedge-simplicity} and show that Lemma~\ref{csimple} holds for $k$-wedge-simplicity as well.

The wedge construction is one of the most powerful tools in the studies of the combinatorial diameter. For example, it is used in the roundabout proof of equivalences of different variants of the Hirsch conjecture in \cite{kw-67} and in the construction of a counterexample to the Hirsch conjecture for bounded polytopes \cite{s-11}. 
We exhibit an example where circuit walks in a wedge do not project to circuit walks in the original polyhedron, which makes it difficult to transfer 
certain results to the circuit setting.

We nevertheless are able to recover generalizations of some of the results that hold 
in the combinatorial case. We show the equivalence of several variants of Conjecture \ref{original} 
for the circuit diameter: 

\begin{theorem}\label{maintheorem}
Consider the following statements:
\begin{enumerate}[(1)]
\item Let $\veu,\vev$ be two vertices of a \Csimple\ polyhedron $P$.
	Then there is a non-revisiting circuit walk from $\veu$ to $\vev$.
\item Let $\veu,\vev$ be two vertices of a \Csimple\ $d$-dimensional polyhedron $P$ with $2d$ facets.
	Then there is a non-revisiting circuit walk from $\veu$ to $\vev$.
\item $\diamfd_\Circuits(f,d)\leq f-d$ for all $f\geq d$
\item  $\diamfd_\Circuits(2d,d)\leq d$ for all $d$
\end{enumerate}
Then $(2) \iff (1) \Rightarrow (3) \iff (4)$.
\end{theorem}


The relationship of these conjectures is more involved than for the combinatorial diameter and require more technical detail. In particular, we have to carefully distinguish between statements for bounded and for unbounded polyhedra. To distinguish between bounded and unbounded quantities, we use superscripts: $\diamfd_\Circuits^u(f,d)$ denotes the maximal circuit diameter of an unbounded $d$-dimensional polyhedron with $f$ facets, while $\diamfd^b_\Circuits(f,d)$ denotes its counterpart for bounded $d$-dimensional polytopes. 

But we also find an interesting connection of the circuit diameters of unbounded polyhedra and bounded polytopes: We show that the existence of a non-revisiting circuit walk between any pair of vertices in every $\Circuits$-simple \emph{bounded} polytope would guarantee a linear bound on the diameter of all $\Circuits$-simple \emph{unbounded} polyhedra. We expect that $\Delta_\Circuits(f,d)$ is attained at an unbounded polytope, and thus $\Delta_\Circuits(f,d)= \Delta_\Circuits^u(f,d)$, but the proof for the combinatorial case does not easily carry over.


\begin{theorem}\label{thm:bounded_and_unbounded}
If all $\Circuits$-simple bounded $(f',d')$-polytopes with $f'\leq f+d-1$ and $d'\leq d$ satisfy the non-revisiting conjecture (Conjecture \ref{non revisiting}), then $\diamfd_\Circuits(f,d)\leq f-1$.
\end{theorem}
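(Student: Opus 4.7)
The plan is to reduce the unbounded case to the bounded case by truncating $P$ along its unbounded directions, applying the non-revisiting hypothesis, and then lifting the resulting walk back. Let $P$ be an unbounded $d$-polyhedron with $f$ facets, and fix two vertices $u, v \in P$; we aim to construct a circuit walk from $u$ to $v$ of length at most $f-1$.

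First, by Lemma~\ref{csimple} we may assume $P$ is $\Circuits$-simple. Since $P$ has at least two vertices, at every simple vertex of $P$ at least one of the $d$ incident edges is bounded (it leads to an adjacent vertex), so the recession cone of $P$ is generated by at most $d-1$ extreme rays. We cap each such extreme ray with a suitable halfspace $H_i^+$, obtaining a bounded polytope $\bar P = P \cap H_1^+ \cap \dots \cap H_{d-1}^+$ of dimension $d$ with at most $f+d-1$ facets, the $H_i$ chosen generically so that $u$ and $v$ remain vertices. If $\bar P$ fails to be $\Circuits$-simple, we invoke Lemma~\ref{csimple} once more to replace it by a $\Circuits$-simple polytope of the same dimension and facet count with circuit diameter at least as large.

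By the non-revisiting hypothesis applied to $\bar P$, there is a non-revisiting circuit walk $u = w_0 \to w_1 \to \dots \to w_k = v$ in $\bar P$ of length $k \leq (f+d-1)-d = f-1$. It remains to convert this walk into a circuit walk of the same length in $P$. Because the walk is non-revisiting, it touches each of the $d-1$ capping facets at most once, and so the walk decomposes into at most $d$ segments whose endpoints lie in $P$, separated by at most $d-1$ crossings of capping hyperplanes. On each segment the step direction is, ideally, a circuit common to $\Circuits(P)$ and $\Circuits(\bar P)$; a step whose maximal length in $\bar P$ is attained on a capping facet must be re-realized as a maximal circuit step in $P$ along the corresponding unbounded direction of the recession cone.

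The main obstacle is this lifting step. Adding rows to $A^2$ changes support-minimality, so $\Circuits(\bar P)$ need not coincide with $\Circuits(P)$, and a maximal circuit step in $\bar P$ that terminates on a capping facet may extend unboundedly in $P$, thereby violating the maximality condition of a circuit walk in $P$. The technical heart of the proof is to choose the capping hyperplanes and exploit the non-revisiting structure to show that a circuit walk of length at most $f-1$ in $\bar P$ can always be transformed into a circuit walk of length at most $f-1$ in $P$, without introducing additional steps.
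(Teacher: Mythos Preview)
Your high-level strategy (truncate to a bounded polytope, apply the non-revisiting hypothesis, then lift the walk back) matches the paper's, but the execution has two genuine gaps that the paper's argument avoids by a specific choice of capping facets.

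First, your claim that the recession cone of $P$ has at most $d-1$ extreme rays is false: the fact that at a given simple vertex at most $d-1$ of the incident edges are unbounded says nothing about the total number of extreme rays of the recession cone, which can be arbitrarily large. So ``one cap per extreme ray'' is not a bounded recipe. Second, and more seriously, you cap with \emph{generic} halfspaces. As you yourself note, this changes the constraint matrix and hence the circuit set, so a circuit step in $\bar P$ need not be a circuit step in $P$; and a step that is maximal in $\bar P$ because it hits a cap may be unbounded in $P$. You flag this as ``the technical heart of the proof'' but do not resolve it, and with generic caps there is no reason it can be resolved.

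The paper fixes both issues simultaneously by choosing the extra facets to be (i) \emph{parallel} to facets already incident to $v$ (opposite outer normals), and (ii) \emph{incident to the starting vertex} $u$. Point (i) guarantees $\mathcal{C}(P')=\mathcal{C}(P)$, since circuits depend only on the matrix $A$ and parallel rows do not create new support patterns; this removes your lifting obstacle entirely. Point (ii), combined with non-revisiting, guarantees that none of the extra facets is ever re-entered during the walk, hence never the sole blocking facet for any step; consequently every step is also a maximal circuit step in $P$. Finally, at most $d-1$ such parallel facets suffice to bound $P$: the recession cone sits inside the inner cone at $v$, which is contained in a simple cone on $d$ of the $v$-facets, one of whose edges can be taken bounded (Lemma~\ref{lem:makebounded}). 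This is the missing idea in your argument.
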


We would like to stress that Theorem \ref{thm:bounded_and_unbounded} gives a connection between the bounded and unbounded diameters of a type that is not known for the combinatorial diameter. If the non-revisiting conjecture can be resolved positively for $d$-dimensional bounded polytopes up to a certain number of facets (recall the combinatorial $d$-step conjecture holds for all \emph{bounded} polytopes of dimension $d\leq 6$ \cite{bs-11}), one immediately obtains a linear bound for unbounded polyhedra up to a slightly lower number of facets. Moreover, in case the non-revisiting conjecture is valid for circuit walks in all bounded polytopes, Theorem \ref{thm:bounded_and_unbounded} would give a general upper bound of the form
$$\diamfd_\Circuits(f,d) \leq  \diamfd_\Circuits^b(f,d) + d-1 \leq f-1.$$
In contrast, for the combinatorial diameter, one can provide a general quadratic  upper bound of the form\footnote{We thank the anonymous referee for the suggestion.}
$$ \Delta_\Edges(f,d) \leq (f-d)\Delta_\Edges^b(f+1,d)-2.$$
This can be seen as follows: Let $\bar P$ be an unbounded polyhedron with $f$ facets. Then add a facet $F$ ``at infinity'' to obtain a bounded polytope $P$ with $f+1$ facets. 

Now consider an edge walk in $P$ between vertices $\veu$ and $\vev$ (not in $F$).
If the walk uses an edge $\vece$ in $F$, replace the step by a shortest walk in the two-dimensional face $F'$ of $\bar P$ that contains $\vece$ -- this walk begins and ends at the neighbors of the vertices of $\vece$.
The Hirsch conjecture holds in dimension $2$, so the length of this path is at most $(f-(d-2)) -2=f-d$.
Replacing each such $e$ by the corresponding path in $F'$, and noting that the construction is only applied if we use two steps to enter and leave $F$, gives a bound of at most $(f-d)\Delta_\Edges^b(f+1,d)-2$ to connect $\veu$ and $\vev$ in $\bar P$. 

Recall that for the combinatorial diameter, a disproof of the Hirsch conjecture was much easier for the unbounded case, with a counterexample $U_4$ in dimension $4$ \cite{kw-67}.
We prove that $U_4$, in contrast to the combinatorial case, \emph{does} satisfy the Hirsch bound in the circuit setting, independent of realization.\footnote{A preliminary version of this result
appeared in the proceedings of Eurocomb 2015~\cite{sy-15b}.} 
Indeed, we show that for circuits, the 4-step conjecture holds even for unbounded polyhedra:

\begin{theorem}[Circuit 4-step]\label{thm:4step}
$\diamfd_\Circuits(8,4) = 4.$
\end{theorem}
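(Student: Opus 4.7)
The plan is to establish $\diamfd_\Circuits(8,4)=4$ by showing both inequalities. The lower bound $\diamfd_\Circuits(8,4) \geq 4$ is witnessed by the $4$-cube $[0,1]^4$: one checks that its only circuits are $\pm \vece_i$, so circuit walks coincide with edge walks and the circuit diameter equals the combinatorial diameter of $4$. The bulk of the work is the matching upper bound $\diamfd_\Circuits(8,4) \leq 4$.

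By Lemma~\ref{csimple}, it suffices to bound the circuit diameter of \Csimple{} $(8,4)$-polyhedra. I would handle bounded polytopes first: the classical $4$-step theorem of Klee and Walkup gives $\diamfd_\Edges^b(8,4) \leq 4$, and because every edge walk is a circuit walk and $\diamfd_\Circuits \leq \diamfd_\Edges$, this immediately yields $\diamfd_\Circuits^b(8,4) \leq 4$.

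The unbounded case is the substantive one because the combinatorial $4$-step conjecture fails there: the Klee-Walkup polyhedron $U_4$ achieves combinatorial diameter $5$. Any \Csimple{} unbounded $(8,4)$-polyhedron with combinatorial diameter at most $4$ is handled by $\diamfd_\Circuits \leq \diamfd_\Edges$; so it remains to treat realizations of $U_4$ (the essentially unique combinatorial type achieving combinatorial diameter $5$) and exhibit a circuit walk of length at most $4$ between the two distant vertices $u, v$. These vertices form a Dantzig pair, so their facet sets partition the $8$ facets, placing us in the setting of Theorem~\ref{maintheorem}(4). Given an arbitrary length-$5$ edge walk $u = w_0, w_1, \ldots, w_5 = v$, I would look for two consecutive edge steps that can be collapsed into a single circuit step: concretely, for some index $i$, a circuit direction at $w_i$ (or at an interior point close to $w_i$) along which a maximal step lands at or past $w_{i+2}$. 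The unbounded rays of $U_4$ and the disjointness of the facet sets of $u$ and $v$ provide natural candidate directions, and the maximality condition for the candidate step has to be verified in each case.

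The main obstacle will be this final step: showing the required circuit shortcut exists in \emph{every} \Csimple{} realization of $U_4$, not merely in a specific coordinate representation. Since the circuit diameter depends on the metric realization (as emphasized in the introduction), one cannot simply fix coordinates for $U_4$ and be done. The argument must either be fully combinatorial, depending only on the face lattice together with the \Csimplicity{} hypothesis, or a parametric one showing that the shortcut persists across the moduli of realizations. I expect this to require an explicit enumeration of the circuit directions compatible with the incidence structure of $U_4$, together with the structural constraints that \Csimplicity{} places on the degenerate configurations of circuits meeting each vertex.
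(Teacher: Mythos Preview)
Your overall architecture matches the paper's first proof: reduce to \Csimple{} polyhedra via Lemma~\ref{csimple}, dispose of the bounded case by the combinatorial $4$-step theorem, and reduce the unbounded case to realizations of $U_4$ via its uniqueness as a non-Hirsch simple $(8,4)$-polyhedron. Your lower-bound argument via the $4$-cube is fine (the paper does not spell this out).

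The genuine gap is exactly where you flag it: the realization-independent circuit walk in $U_4$. Your proposed mechanism---find two consecutive edges of a length-$5$ edge walk that collapse into one circuit step---is not what works, and you have not supplied an argument that such a collapse exists in every realization. The paper's construction is different and more concrete. It first takes two \emph{edge} steps \texttt{V5678}$\to$\texttt{V1678}$\to$\texttt{V1478}, landing in the $2$-face cut out by facets \texttt{1} and \texttt{4}. The key circuit step from \texttt{V1478} is taken along the direction $\veg$ of the edge \texttt{V1458}$\to$\texttt{V1345}, an edge \emph{not} incident to \texttt{V1478}. Feasibility of $\veg$ at \texttt{V1478} is proved purely from the incidence structure: writing the unbounded ray direction $\ver$ (of \texttt{R124}) as a strict conic combination of $\veg$ and $-\veh$ (where $\veh$ is the edge direction \texttt{V1478}$\to$\texttt{V1458}) and rearranging shows $\veg$ lies in the cone generated by $\veh$ and $\ver$, both feasible at \texttt{V1478}. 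Boundedness of the step and the fact that it terminates within one edge step of \texttt{V1234} are likewise read off from the combinatorics of that $2$-face. This conic-combination argument using the unbounded ray is the missing idea; without it your proposal is a plan, not a proof.

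The paper also gives a second, independent proof via $4$-prismatoids (Lemma~\ref{4prismatoid} and Corollary~\ref{4spindle}), strengthening a result of Santos et al.\ to produce a spindle walk that enters a new $v$-facet at each step; combined with the bounded-to-unbounded transfer of Lemma~\ref{lem:4stepsunbounded} this again yields the theorem. You may find that route conceptually cleaner, since it avoids the case analysis on $U_4$ entirely.
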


We give two proofs of this fact.
The first uses the uniqueness of $U_4$, and the second strengthens the result of Santos et al.~\cite{sst-12} to get a $\veu$-$\vev$ walk in an arbitrary 4-spindle where a $\vev$-facet is entered at each step.


\vspace{2mm}
In summary, the results in this paper include:
\begin{itemize}
\item An adaption of the concept of simple polyhedra to the circuit context
 via \Csimple, wedge-simple and $k$-wedge-simple polyhedra. An example exhibiting the limitations of the wedge construction as a tool in the studies of circuit walks. {\em (Section \ref{sec:simple})}
\item Relations between various conjectures on the circuit
 diameter, including a circuit equivalent of the Hirsch conjecture and its $d$-step and 
 non-revisiting variants (Theorem~\ref{maintheorem}). Further, a proof that the circuit diameter conjecture for bounded polytopes implies a linear bound on the circuit
 diameter of unbounded polyhedra, provided the non-revisiting conjecture is true (Theorem~\ref{thm:bounded_and_unbounded}).
 {\em (Section \ref{sec:conjectures})}
\item Two proofs of the circuit 4-step conjecture (Theorem~\ref{thm:4step}) for unbounded polyhedra. The first one is based on showing that the circuit diameter conjecture holds in the most prominent 
 case where the combinatorial Hirsch conjecture fails (Theorem~\ref{U4thm}). The second proof extends a result of Santos et al.~\cite{sst-12}.  {\em (Section \ref{sec:4step})}
\item A brief discussion of related open questions. {\em (Section \ref{sec:discussion})}
\end{itemize}

%

\section{$\mathcal{C}$-Simplicity and Wedge-Simplicity}\label{sec:simple}
When considering bounds for the combinatorial diameter, we can restrict to simple polyhedra. This is because for any $d$-dimensional non-simple polyhedron with $f$ facets, a mild perturbation of the right-hand sides gives a simple polyhedron with the same number of facets and at least the same diameter \cite{ykk-84}. 

The key advantage of a simple polyhedron in the studies of the combinatorial diameter is that each step along an edge leaves exactly one facet and enters exactly one other facet. This makes an analysis significantly easier. 
To take advantage of non-edge circuits, walks will leave more than one facet at a time.
However, we can require that circuit steps do not arrive at more than one facet at a time.
The benefit of such walks has been observed in a special case before, see the last page of \cite{bfh-14}. 

In Section \ref{sec:Csimple}, we introduce such a property ($\mathcal{C}$-simplicity) and prove that $\diamfd_\Circuits(f,d)$ is realized by a \Csimple\
polyhedron. The wedge construction is one of the most powerful tools in the studies of combinatorial diameters. In Section \ref{sec:wedgesimple} we discuss a further specialization to polyhedra for which wedging over each of its facets still satisfies this property, and prove that $\diamfd_\Circuits(f,d)$ is always realized in this even more restrictive class of polyhedra. However, we also exhibit the limitations of wedging for studies of circuit walks. 

\subsection{$\mathcal{C}$-Simplicity}\label{sec:Csimple}


For $\vey^{(i)} \in P$, let $H^{(i)}$ denote the set of facets of $P$ that are incident to $\vey^{(i)}$. First, let us introduce some terminology for circuit walks, in which we enter only one new facet in each step.

\begin{definition}[Simple walks]
Let $P$ be a polyhedron. A circuit walk $\vey^{(0)},\ldots,\vey^{(k)}$ in $P$ is {\em simple} if $|H^{(i+1)}\backslash H^{(i)}|=1$ for $i = 1,2,\ldots,k$, where $H^{(i)}$ denotes the set of facets incident to $\vey^{(i)}$. Walks that violate this condition are called {\em non-simple}.
\end{definition}

We are particularly interested in polyhedra for which it suffices to only consider simple circuit walks. 
As the combinatorial diameter is an upper bound on
the circuit diameter of a polyhedron, for the study of Conjecture \ref{original}, it suffices to consider circuit walks of length at most $\diamfd_\Edges(f,d)$, which is bounded above for example by $f^{\log d +2}$ \cite{kk-92}. This leads to the following definition.

\begin{definition}[\Csimple]
Let $P$ be a polyhedron. We say $P$ is \Csimple\ if, for all pairs of vertices $\vev^{(1)}, \vev^{(2)}$, all shortest circuit walks from $\vev^{(1)}$ to $\vev^{(2)}$ are simple. 


Let $M$ be a finite set of points in $P$ that includes the set of vertices. If all shortest circuit walks starting at any point in $M$ and ending in any vertex $\vev$ are simple, then we say $P$ is {\em \Csimple\ with respect to $M$}.
\end{definition}

Note that $\mathcal{C}$-simplicity is a strictly stronger condition than simplicity of a polyhedron, as edge walks are a special type of circuit walks. Note further that it implies that only walks of length at most $\diamfd_\Edges(f,d)$ have to be considered for a polyhedron to be  \Csimple\, and only walks of length at most $\diamfd_\Edges(f,d) + d$ have to be considered for a polyhedron to be \Csimple\ with respect to $M$. Further, in our studies non-simple circuit walks will only appear in polyhedra that are not \Csimple. 

The goal for this section is to prove that it suffices to consider \Csimple\ polyhedra for the study of Conjecture \ref{original}, which leads to the following variant of the conjecture.

\conjecture[$\mathcal{C}$-Simplicity]\label{simple}
{
 	For any \Csimple\ $d$-dimensional polyhedron with $f$ facets the circuit diameter is bounded above by $f-d$.\\
}

We prove the equivalence of Conjecture \ref{simple} and Conjecture \ref{original} by showing that for fixed $f$ and $d$, $\diamfd_\Circuits(f,d)$ can be realized by a \Csimple\ polyhedron. We do so by
describing a perturbation of a polyhedron $P$ such that the perturbed polyhedron $P'$ is  \Csimple\ and has at least the same circuit diameter as $P$. 

The perturbations we consider are to the right hand sides of the defining inequalities, and thus do not change
the set of circuits, which depends only on $A$.  That is, the right-hand side $\veb$ is changed to $\veb \rightarrow \veb'=\veb + \vep$ for some vector $\vep$ with $\|\vep\|<\epsilon$ for a sufficiently small $\epsilon$. We call such a perturbation a \emph{mild perturbation}. The perturbed polyhedron is $P'=\{\vez\in \R^d: A\vez\geq \veb'\}$.  Note that for a mild perturbation, each facet remains a facet and the dimension does not change. 


The challenge here lies in the fact that the circuit diameter of a polyhedron depends on its realization, and not only its combinatorial structure (see \cite{bfh-14} for examples). Hence the effect of a perturbation, in theory, might reduce the diameter. We have to carefully check that there is a mild perturbation for which this is not the case.

{
\renewcommand{\thetheorem}{\ref{csimple}}
\begin{lemma}\label{Csimple}
Let $P$ be a polyhedron. Then there is a \Csimple\ polyhedron $P'$ in the same dimension and with the same number of facets with $\diamp_\Circuits(P)\leq \diamp_\Circuits(P').$
\end{lemma}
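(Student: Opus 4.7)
The approach is to realize $P'$ as a mild right-hand-side perturbation $P'=\{\vez\in\R^d:A\vez\geq \veb+\vep\}$ for a carefully chosen $\vep\in\R^f$ of sufficiently small norm. Because $A$ is unchanged, the set of circuits of $P'$ coincides with $\Circuits(P)$, and for $\|\vep\|$ small enough the number of facets and the dimension of $P'$ agree with those of $P$. It then suffices to arrange that $\vep$ simultaneously satisfies two conditions: (a) $P'$ is \Csimple, and (b) $\diamp_\Circuits(P)\leq \diamp_\Circuits(P')$.

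For (a), the observation is that although the space of all circuit walks is not finite, the set of relevant \emph{data} --- namely, triples consisting of a starting vertex of $P$, a sequence of circuit directions of length at most $\diamfd_\Edges(f,d)$, and an index at which non-simplicity could occur --- is finite, and each such triple determines the associated walk in $P_\vep$ uniquely because step lengths are maximal. For each triple, the non-simplicity condition ``the step endpoint lies on at least two new facets of $P_\vep$'' amounts to a single linear equation in the entries of $\vep$, so it cuts out a proper algebraic subset of $\R^f$. The union of these finitely many bad loci is still proper, hence its complement is open and dense in every neighborhood of $0$, and any $\vep$ in this complement yields a \Csimple\ polyhedron $P'$.

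For (b), the plan is to \emph{shadow} circuit walks in $P'$ by circuit walks in $P$ of the same length. Fix vertices $\veu,\vev$ of $P$ achieving $\diamp_\Circuits(P)=k$, and let $\veu',\vev'$ be the corresponding vertices of $P'$ defined by the same active constraint sets. Given any circuit walk $\veu'=\vey^{(0)},\ldots,\vey^{(\ell)}=\vev'$ in $P'$ with circuit directions $\veg^{(0)},\ldots,\veg^{(\ell-1)}$, I would consider the walk in $P$ that starts at $\veu$ and uses the same directions with maximal step lengths in $P$. The maximum-step-length function is piecewise linear in $\veb$ and continuous away from ties, so for $\|\vep\|$ small the shadow vertices $\vez^{(i)}$ remain close to $\vey^{(i)}$ and in particular $\vez^{(\ell)}$ is close to $\vev$. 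Since $P'$ is \Csimple, the set of facets entered at each step of the walk in $P'$ equals the set entered at the corresponding step of the shadow walk in $P$, so $\vez^{(\ell)}$ lies on the same $d$ facets as $\vev$ and therefore equals $\vev$. This gives $\dist_\Circuits(\veu,\vev)\leq \ell$, forcing $\ell\geq k$. A single $\vep$ in a small ball satisfying both (a) and (b) then produces the desired $P'$.

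\textbf{Main obstacle.} The delicate point is part (b): although the perturbation is geometrically small, the combinatorial structure of a walk can change. A non-simple step of $P$ may split into several simple steps in $P'$, and conversely a single simple step in $P'$ may correspond to a tied configuration in $P$. Controlling these changes by careful facet-by-facet bookkeeping, while keeping $\|\vep\|$ small enough that the ``first-hit'' facet at each non-degenerate step is preserved, is the technical heart of the argument; it is what guarantees that the shadow walk in $P$ terminates exactly at $\vev$ and not at a merely nearby point.
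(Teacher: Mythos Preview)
Your part (a) is essentially the paper's argument and is fine: finitely many circuit sequences of bounded length, each non-simplicity event is a single linear condition on $\vep$, hence the bad locus has measure zero.

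Part (b) has a real gap. You propose to take a short walk $\veu'=\vey^{(0)},\ldots,\vey^{(\ell)}=\vev'$ in $P'$ and \emph{replay the same circuit directions} from $\veu$ in $P$. This fails for two reasons. First, a direction $\veg^{(i)}$ that is feasible at $\vey^{(i)}\in P'$ need not be feasible at the nearby shadow point $\vez^{(i)}\in P$: if $\vez^{(i)}$ is a degenerate vertex of $P$ that splits into several simple vertices in $P'$, then at some of those new vertices there are feasible directions that point \emph{out of} the inner cone of $\vez^{(i)}$ in $P$. Your shadow walk simply has no legal step to take there. Second, your assertion that ``since $P'$ is \Csimple, the set of facets entered at each step of the walk in $P'$ equals the set entered at the corresponding step of the shadow walk in $P$'' is false: $P$ is precisely the polyhedron that is \emph{not} \Csimple, so a single step in $P$ may enter several facets simultaneously even when the parallel step in $P'$ enters only one.

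The paper's remedy is not to insist on the same directions and same length. Instead it distinguishes two cases at each step: if $\veg^{(i)}\in I(\vez^{(i)})\cap I(\vey^{(i)})$, the two maximal steps stay close; if $\veg^{(i)}\in I(\vey^{(i)})\setminus I(\vez^{(i)})$, then the step in $P'$ is necessarily short (it is blocked by a facet that passes through $\vez^{(i)}$), so $\vey^{(i+1)}$ is still close to $\vez^{(i)}$ and one simply \emph{does not advance} the shadow in $P$. Thus any length-$\ell$ walk in $P'$ projects to a walk of length $k\leq\ell$ in $P$ between the corresponding close vertices, and the diameter inequality follows. Your ``main obstacle'' paragraph senses the issue but the fix you sketch (preserve the first-hit facet at non-degenerate steps) does not handle the infeasible-direction case; you need to allow the shadow walk in $P$ to be strictly shorter rather than force it to have exactly $\ell$ steps terminating at $\vev$.
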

\addtocounter{theorem}{-1}
}

\begin{proof}
Let $P$ be a $d$-dimensional polyhedron with $f$ facets and let $P'$ denote a polyhedron derived by a mild perturbation. By definition of a mild perturbation, $P$ and $P'$ have the same number of $f$ facets and dimension $d$. Thus it suffices to prove that this perturbation can be performed such that $P'$ is \Csimple\ and has at least the same circuit diameter as $P$.

First, recall that $P$ and $P'$ share the same finite set of circuits. Further, observe that it suffices to only consider circuit walks of length at most $\diamfd_\Edges(f,d)$ to validate that $P'$ is
\Csimple. There are a finite number of points $\vey \in P'$ that may appear in such a walk. Hence the condition $|H^{(i+1)}\backslash H^{(i)}|=1$ only has to be satisfied for a finite set of pairs $(\vey^{(i)},\vey^{(i+1)})$. This implies that (for fixed $\epsilon$) the set of right-hand sides $\veb'=\veb + \vep$ with $\|\vep\|\leq \epsilon$ that do not give a \Csimple\ polyhedron $P'$ is of volume $0$. In turn, for any given $\epsilon$ there are infinitely many perturbations that yield a \Csimple\ polyhedron $P'$.

It remains to see that there is such a perturbation for which the circuit diameter of $P'$ is at least the circuit diameter of $P$. 
Let $Y_P$ denote the set of all points on circuit walks in $P$ of length at most $\diamfd_\Edges(f,d)$.
A simple but important observation is that 
the points in $Y_P$ are at least a certain fixed distance from each other. 
We say that a pair of points are \emph{close} if they are less than half the minimum distance between
pairs of points in $Y_P$.  Consider then a perturbation that is small enough that basic solutions in
$P'$ remain close to basic solutions in $P$.  Take $\vey \in P$ and $\vez \in P'$.
Let $I(\vey)$ denote the \emph{inner cone}, i.e.~the cone of feasible directions, of $\vey$ with respect 
to $P$ and $I(\vez)$ denote the inner cone of $\vez$ with respect to $P'$.  Then we have the following:

 
\begin{enumerate}
\item There is a one-to-many correspondence between vertices $\vey$ of $P$ and vertices $\vez$ of $P'$ where each $\vez$ is close to a unique $\vey$ and at least one $\vez$ is close to a given $\vey$
(and possibly many are). 
In particular, each vertex $\vez$ of $P'$ is associated to precisely one close vertex $\vey$ in $P$.
\item Let $\vey \in Y_P$, $\vez \in P'$ be close and let $\veg$ be a circuit in $I(\vey)\cap I(\vez)$. Then a step along $\veg$ from $\vez \in P'$ gives a $\vez' \in P'$ that is close to precisely one $\vey'\in Y_P$, which is derived from a step along $\veg$ from $\vey \in P$.
\item Let $\vey \in Y_P$, $\vez \in P'$ be close. Then a step along $\veg \in I(\vez) \backslash I(\vey)$ from $\vez$ will give a  $\vez' \in P'$ that is also close to precisely $\vey$.
\end{enumerate}
Let us consider a circuit walk $\vez^{(0)},\ldots,\vez^{(k')} \in P'$ for $k' \leq \diamfd_\Edges(f,d)$. Informally, the above properties tell us that it starts close to a vertex of $P$ (1.) and stays close to points in $Y_P$ in each step (2., 3.). More precisely, each $\vez^{(i)}$ is close to precisely one $\vey^{(i)} \in Y_P$. If (2.) is valid for $\vey^{(i)}, \vez^{(i)}$, then $\vey^{(i+1)}\neq \vey^{(i)}$. Else if (3.) holds, then $\vey^{(i+1)}= \vey^{(i)}$. This implies that each $\vez^{(0)},\ldots,\vez^{(k')}$ corresponds to a circuit walk $\vey^{(0)},\ldots,\vey^{(k)}$ in $P$ with $k\leq k'$. 

Let now $\diamp_\mathcal{C}(P)=k$ and let $\vey^{(0)},\ldots,\vey^{(k)}$ be a walk in $P$ realizing the diameter. Further, let $\vez^{(0)}$ be a vertex of $P'$ close to $\vey^{(0)}$ and $\vez^{(k')}$ be a vertex of $P'$ close to $\vey^{(k)}$. If the circuit distance of $\vez^{(0)}$ and $\vez^{(k')}$ is strictly less than $k$, then there is a circuit walk $\vez^{(0)},\ldots,\vez^{(k')}$ and $i,i'\leq k$ such that $\vez^{(i')}$ is close to $\vey^{(i)}$ for $i'<i$. By the above, we then know that the walk $\vez^{(0)},\ldots,\vez^{(i')}$ corresponds to a walk $\vey^{(0)},\vey'^{(1)},\ldots,\vey'^{(i')}=\vey^{(i)}$ of length $i'<i$. This implies that the circuit distance of $\vey^{(0)}$ and $\vey^{(k)}$ is less than $k$, a contradiction. Thus the circuit distance of $\vez^{(0)}$ and $\vez^{(k')}$ is at least $k$, which proves the claim.




\end{proof}

Hence we have proven the following:

\begin{corollary}\label{Csimple-reduce}
For any $f > d > 1$, $\diamfd_\Circuits(f,d)$ is attained by a \Csimple\ $d$-dimensional polyhedron with $f$ facets.
\end{corollary}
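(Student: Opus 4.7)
The plan is to derive the corollary as an essentially immediate consequence of Lemma~\ref{csimple}. First I would argue that the quantity $\diamfd_\Circuits(f,d)$ is in fact realized (not merely approached) by some $d$-dimensional polyhedron with $f$ facets. Since every maximal circuit walk has integer length, and since the relation $\diamfd_\Edges(f,d) \geq \diamfd_\Circuits(f,d)$ stated in the excerpt, together with the Kalai–Kleitman bound $\diamfd_\Edges(f,d) \leq f^{\log d + 2}$ already cited, gives a uniform finite upper bound, the supremum is a finite nonnegative integer and is therefore attained by at least one polyhedron $P_0$.

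Next, I would apply Lemma~\ref{csimple} to $P_0$. The lemma produces a $\mathcal{C}$-simple polyhedron $P'$ of the same dimension $d$ and with the same number $f$ of facets, satisfying
\[
\diamp_\Circuits(P') \;\geq\; \diamp_\Circuits(P_0) \;=\; \diamfd_\Circuits(f,d).
\]
On the other hand, $P'$ itself is a $d$-dimensional polyhedron with $f$ facets, so by the definition of $\diamfd_\Circuits(f,d)$ as a maximum over this class we also have $\diamp_\Circuits(P') \leq \diamfd_\Circuits(f,d)$. Combining the two inequalities gives $\diamp_\Circuits(P') = \diamfd_\Circuits(f,d)$, which is exactly the statement of the corollary.

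There is no real obstacle here beyond recording the finiteness argument: the content of the corollary is entirely packaged inside Lemma~\ref{csimple}, and the role of the corollary is to reformulate that lemma as a reduction statement for the extremal quantity $\diamfd_\Circuits(f,d)$. The hypothesis $f > d > 1$ is only needed to ensure that the class of full-dimensional polyhedra with the prescribed numbers of facets is nonempty and that the perturbation argument underlying the lemma applies, so no additional case analysis is required.
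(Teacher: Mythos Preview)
Your proposal is correct and matches the paper's approach: the paper states the corollary immediately after Lemma~\ref{csimple} with the phrase ``Hence we have proven the following,'' treating it as a direct consequence with no separate proof. Your write-up simply makes explicit the finiteness/attainment step that the paper leaves implicit.
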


\subsection{Wedge-Simplicity and the Limitations of Wedges for Circuit Walks}\label{sec:wedgesimple}

One of the important tools for studying the combinatorial diameter is the well-known wedge construction. Let us recall a formal definition.

\begin{definition}[Wedge]\label{def:wedge}
Let $P$ be a $d$-dimensional polyhedron and let $F$ be a facet of $P$. A \emph{wedge} on $P$ over $F$  is a $(d+1)$-dimensional polyhedron $P'=H^\leq \cap (P \times L)$, where $P \times L$ denotes the product of $P$ with $L=[0,\infty)$ and $H^\leq \subset \R^{d+1}$ is a closed halfspace 
with $P\times\{0\} \subset H^\leq$ that is defined by a hyperplane $H$ that intersects the interior of $P\times L$ and satisfies 
$H \cap (P \times\{0\} )= F\times\{0\}$.
\end{definition}

Let $P=\{\vez\in \R^d: A \vez\geq \veb\}$ be a $d$-dimensional polyhedron and let $F$ be a facet of $P$ defined by $a^T\vez\geq b$. Then the $(d+1)$-dimensional polyhedron $$P'=\{(\vez^T,z_{d+1})^T\in \R^{d+1}: A \vez\geq \veb, 0\leq z_{d+1}\leq c\cdot(a^T\vez-b)\}$$ is a wedge for any fixed $c>0$.

Figure \ref{img:wedge} depicts an example. Note that we only consider the wedging operation when it is done over a facet of $P$. The operation can be extended to faces of smaller dimension, but
we do not use this here. Also, note that there is a distinction between `the' wedge on $P$ over $F$ (the combinatorial class) and `a' wedge on $P$ over $F$ (one realization in this combinatorial class; a different realization arises when choosing a different $H^\leq$). In what follows, which one we refer to will be clear from the context.


By construction, $P'$ has $f+1$ facets. The {\em lower base} $P_l= P\times \{0\}$ of $P'$  and the {\em upper base} $P_u=H\cap (P\times L)$ of $P'$ are facets of $P'$, and both are isomorphic to the original polyhedron $P$. The remaining $f-1$ facets of $P'$ are contained in spaces of the form $G\times L$, where $G\neq F$ is a facet of $P$; we call them the {\em sides} of the wedge. The lower base $P_l$ lies in the subspace $\mathbb{R}^d\times \{0\}$ while the upper base $P_u$ lies in the affine subspace $H$ of dimension $d$. 

We use $\phi$ to denote an isomorphism between  $\mathbb{R}^d\times \{0\}$ and $H$:  $\phi$ represents the projection of a vector from $\mathbb{R}^d\times \{0\}$ to $H$ corresponding to the product with $L$, and $\phi^{-1}$ represents the projection of a vector from $H$ to $\mathbb{R}^d\times \{0\}$ corresponding to the product with $L$. More precisely, for $H= \{\vez\in \R^d: z_{d+1}= c\cdot(a^T\vez-b)\}$, $\phi(\vez,0)=(\vez,c\cdot(a^T\vez-b))$ and $\phi^{-1}(\vez,c\cdot(a^T\vez-b))=(\vez,0)$.


\begin{figure}[htbp]
\begin{center}
\includegraphics[width=0.8\textwidth]{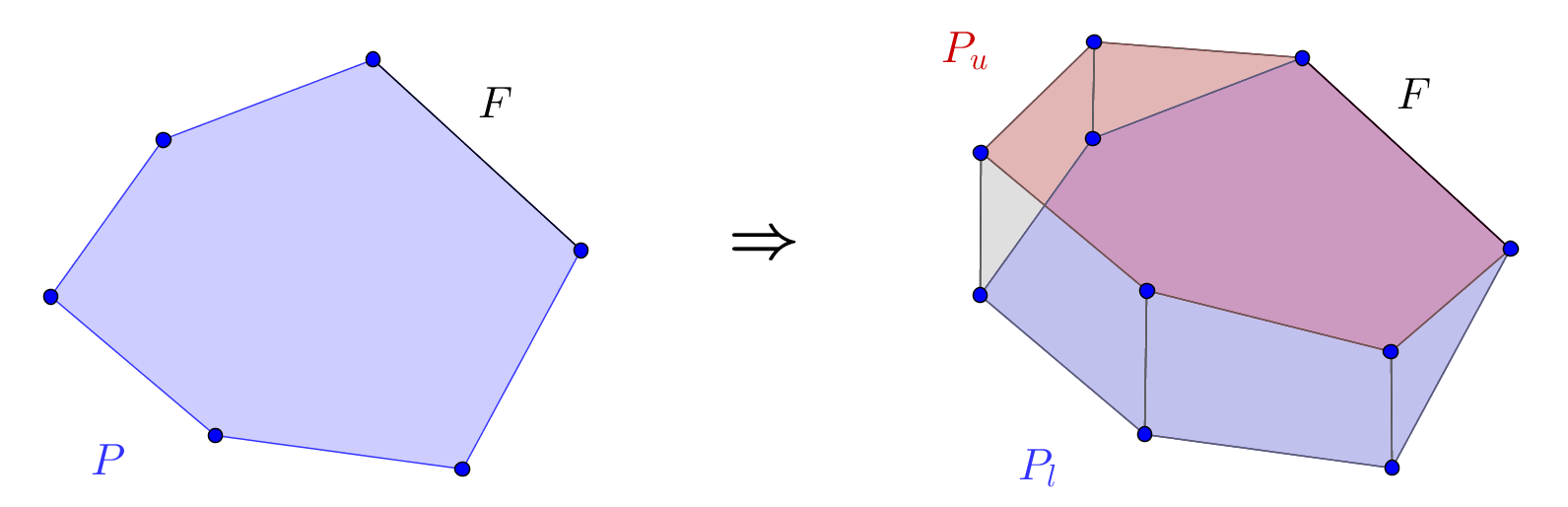}
\caption{The wedge $P'$ on the hexagon $P$ over facet $F$. Bases are $P_l$ and $P_u$.} \label{img:wedge}

\end{center}
\end{figure}

Let us take a look at the set of circuits of a wedge $\Circuits(P')$. The set $\Circuits(P')$ contains precisely the normalized vectors in the linear subspaces coming from the intersection of any subset of $d$ facets. The following lemma characterizes this set in terms of $\Circuits(P)$:

\begin{lemma}[Circuits of a wedge]\label{lemma:wedgecirc}
Let $P \subseteq \R^d \times \{0\}$ be a $d$-dimensional polyhedron with set of circuits $\Circuits(P)$ and let $F$ be one of its facets. Then if $P'$ is a wedge on $P$ over $F$, the set of circuits $\Circuits(P')$ is comprised of vectors of the form
\begin{enumerate}[(i)]
\item $(0,0,\ldots,0,\pm 1)^T \in \R^{d+1}$
\item $(\pm \veg,0)^T \in \R^{d+1}$, where $\veg \in \Circuits(P) \subseteq \R^d$
\item $\phi\left((\pm \veg,0)^T\right) \in \R^{d+1}$, where $\veg \in \Circuits(P) \subseteq \R^d$
\end{enumerate}
\end{lemma}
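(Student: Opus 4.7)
My plan is to prove the lemma by characterizing circuits via the standard fact that, for a full-dimensional polyhedron defined by an irredundant inequality system $A'\vez \geq \veb'$ in $\R^{d+1}$, the (unnormalized) circuit directions are exactly the spanning vectors of one-dimensional subspaces of the form $\bigcap_{i \in S} \ker(\vea_i'^T)$, where $S$ indexes $d$ linearly independent rows of $A'$. I would apply this to the facet description of $P'$ and then do a case split on which facets are chosen.

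Concretely, I would first write down the facet-defining hyperplanes of $P'$: the lower base $P_l$ has equation $z_{d+1} = 0$; each side $G \times L$ (for $G \neq F$ a facet of $P$ with defining inequality $\vea_G^T \vez \geq b_G$) has equation $\vea_G^T \vez_{1:d} = b_G$ in $\R^{d+1}$; and since $H$ is a $d$-flat containing $F \times \{0\}$ but not equal to $\R^d \times \{0\}$, the upper base $P_u$ has equation of the form $\vea_F^T \vez_{1:d} + \beta z_{d+1} = b_F$ for some $\beta \neq 0$ determined by the realization. Having fixed this description, I would unpack $\phi$ as the map sending $(\vex,0)^T$ to the unique intersection of the line $(\vex,0)^T + \R \ve{e}_{d+1}$ with $H$, so that on direction vectors $\phi((\vecc,0)^T) = (\vecc,\,-\vea_F^T\vecc/\beta)^T$.

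Next I would split into four cases according to how many of $\{P_l, P_u\}$ appear among the $d$ chosen hyperplanes. Case A (neither base, $d$ sides): the common kernel is spanned by $\ve{e}_{d+1}$, giving type~(i). Case B ($P_l$ and $d-1$ sides): the condition $z_{d+1}=0$ reduces the remaining system to $d-1$ independent facet equations of $P$, whose kernel is spanned by a circuit $\vecc \in \Circuits(P)$; this gives $(\vecc,0)^T$, type~(ii). Case D (both bases, $d-2$ sides): using $z_{d+1}=0$ turns the $P_u$-equation into $\vea_F^T \vez_{1:d}=0$, so we again get $(\vecc,0)^T$, but now for circuits $\vecc$ arising from facet sets of $P$ that include $F$; together with Case B this produces every circuit of $P$ in the form (ii). Case C ($P_u$ and $d-1$ sides, none of which is $F$): solving $d-1$ side equations fixes a direction $\vecc \in \Circuits(P)$, and the $P_u$ equation forces $x_{d+1}=-\vea_F^T\vecc/\beta$, yielding exactly $\phi((\vecc,0)^T)$, type~(iii). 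I would note that if $\vea_F^T\vecc = 0$ the type-(iii) vector collapses to a type-(ii) vector, which is consistent with the lemma.

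To finish, I would argue the converse: every type~(i), (ii), (iii) vector is indeed a circuit of $P'$, by exhibiting for each one a set of $d$ linearly independent facet-equations of $P'$ whose kernel it spans — the constructions above give precisely such sets. The main obstacle I anticipate is bookkeeping around whether the defining facets of a circuit of $P$ include $F$ or not, since this distinction is invisible in the statement but controls whether a given circuit of $P$ appears as type~(ii) via Case B or Case D, and whether its image under $\phi$ is genuinely a new direction or reduces to type~(ii); handling this carefully and checking linear independence of the chosen row sets in each case is the only real work, the rest being a direct computation with $\phi$.
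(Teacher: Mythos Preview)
Your proposal is correct and follows essentially the same approach as the paper: both argue by case-splitting on which subset of $d$ linearly independent facet hyperplanes of $P'$ is chosen, according to whether the lower base, upper base, both, or neither are included. Your version is more explicit (you write out the facet equations and compute $\phi$ directly, and you separate the ``both bases'' subcase from the ``$P_l$ only'' subcase, whereas the paper folds these together by noting that $P_u$ corresponds to the facet $F$), and you address the converse inclusion, which the paper leaves implicit; but the underlying argument is the same.
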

\begin{proof}
Each circuit direction of $P'$ is defined by a selection of $d$ facets with
linearly independent outer normals. 

(i) First, consider $d$ sides of $P'$ and recall that they correspond to $d$ facets $G_1,\dots,G_d$ of $P$.
If they intersect, their intersection $G_1 \cap \dots \cap G_d$ is a single point $\veu$ in $\mathbb{R}^d$ which may either be in $P$ (in which case it is a vertex) or not in $P$.
Hence the intersection of the corresponding sides of $P'$ is
\[ (G_1 \times L) \cap \cdots \cap (G_d \times L) = (G_1 \cap \cdots \cap G_d) \times L = \{\veu\} \times L,\]
which corresponds to the circuit direction $(0,0,\ldots,0,\pm 1)^T \in \R^{d+1}$.


(ii) Next, let the lower base $P_l$ be one of the $d$ facets in the intersection.
Then the other $d-1$ facets again correspond to facets $G_1,\dots,G_{d-1}$ of $P$: For the sides, these are the same facets as above; the facet corresponding to $P_u$ is $F$.
For this, the intersection of these $d$ facets corresponds to the intersection 
\[ (G_1 \times L) \cap \cdots \cap (G_{d-1} \times L) \cap P_l = G_1 \cap \cdots \cap G_{d-1},\]
which gives two circuits $\pm \veg\in \Circuits(P) \subseteq \mathbb{R}^d$.
We obtain circuits $(\pm \veg,0)^T\in \mathbb{R}^{d+1}$. 

(iii) Finally, let the upper base $P_u$ be one of the $d$ facets in the intersection.
Then the other $d-1$ facets correspond to facets $G_1,\dots,G_{d-1}$ of $P_u$; in particular, the facet corresponding to $P_l$ is $F$.
The facets $G_1,\dots,G_{d-1}$ are the projections of corresponding facets $G'_1,\dots,G'_{d-1}$ of $P_l$ with respect to $\phi$.
Due to this, we obtain circuits $\phi((\pm \veg,0)^T) \in \mathbb{R}^{d+1}$.

Note that $\phi((\pm \veg,0)^T)=(\pm \veg,0)^T$ for any $(\pm \veg,0)^T$ in the affine hull of $F$.
These circuits arise in both (ii) and (iii).
\end{proof}

Wedges are a basic building block for results on combinatorial diameters, due to some nice properties: 
\begin{enumerate}
\item Wedging over a facet $F$ of a simple polyhedron $P$ gives a simple polyhedron $P'$.
	This is because all vertices of $P'$ are contained in $P_l\cup P_u$, and both $P_l$ and $P_u$ are isomorphic to the simple polyhedron $P$.
\item The wedge satisfies $\diamp_\Edges(P') \leq \diamp_\Edges(P) + 1$, as for any vertex $\vev\in P_u$ there is a neighboring vertex $\veu\in P_l$; in fact $\vev=\phi(\veu)$.
\item Any edge walk in $P'$ transfers to an edge walk in $P$ by projecting to $\R^d\times\{0\}$ using $\phi^{-1}$.
	Because of this, we obtain $\diamp_\Edges(P)\leq \diamp_\Edges(P')$.
\end{enumerate}

For circuit walks in wedges, the situation is more involved.
Clearly all wedges $P'$ on $P$ over a facet $F$ satisfy $\diamp_\Circuits(P') \leq \diamp_\Circuits(P) + 1$, due to the neighboring vertices $\vev\in P_u$ and $\veu\in P_l$ (see $2.$ above).
In contrast, circuit walks in $P'$ do not necessarily transfer to circuit walks in $P$.
This is because it is possible to hit the interior of $P_u$, as depicted in Figure \ref{img:wedge_circuit}: Let a walk begin at a vertex $\veu\in P_l\backslash F$.
Then take a step along a circuit in $\Circuits(P_u)$, which will give a $\vey$ in a side of the wedge.
Then continue from $\vey$ along a circuit in $\Circuits(P_l)$ to $\vey'$, which may lie in the interior of $P_u$.
Projecting this walk down to $P_l$ gives a circuit step in $P$ that does not use maximal step length and is thus not a circuit walk.
The point $\vey'$ may also be incident to both $P_u$ and a side of the wedge, or lead to another point where this happens, in which case $P'$ is not \Csimple\ -- and this is possible even if $P$ itself is \Csimple.
See Figure \ref{img:wedge_notCsimple}.

\begin{figure}[htbp]
\begin{center}
	\begin{minipage}{0.3\textwidth}
	\centering
	\includegraphics[width=\textwidth]{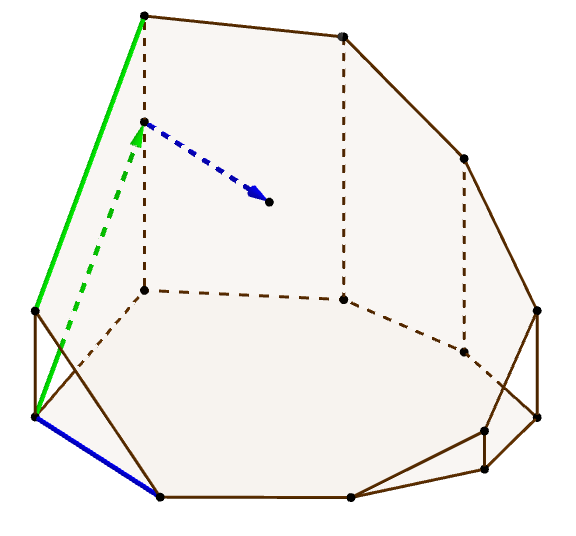}
	\end{minipage}
	\qquad
	\begin{minipage}{0.3\textwidth}
	\centering
	\includegraphics[width=\textwidth]{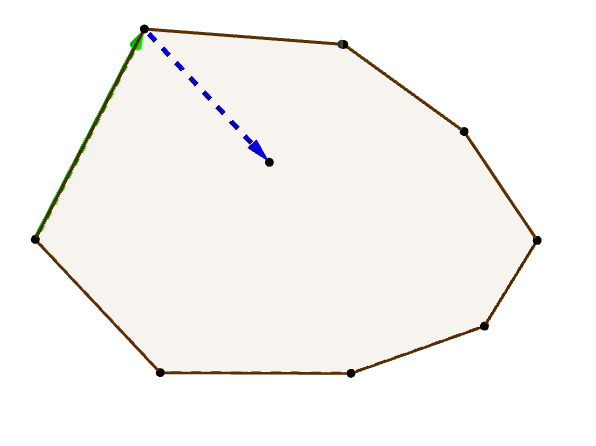}
	\end{minipage}
\caption{A circuit walk in $P'$ that does not project to a circuit walk in $P_l.$} \label{img:wedge_circuit}
\end{center}
\end{figure}

Thus the corresponding circuit formulations of the above Properties $1$ and $3$ do not hold.
In particular, the wedge operation may reduce the circuit diameter by creating `shortcuts' between vertices of $P_l$ by walks that take an intermediary step into the interior of $P_u$.
In fact, it is possible to construct a polyhedron $P$ and a wedge $P'$ with $\diamp_\Circuits(P)> \diamp_\Circuits(P')$ (based on the construction for Lemma $15$ in \cite{bdf-16}).

While the limitations of circuit walks with respect to Property $3$ are fundamental, a satisfactory solution can be found to address Property $1$.
On one hand, a wedge operation may create a $P'$ that is not \Csimple, even if the underlying polyhedron $P$ is \Csimple.
On the other hand, we are able to prove that $\Delta_\Circuits(f,d)$ is realized in a class of polyhedra for which this is not the case.
First, we require some new terminology that strengthens the \Csimple\ property.

\begin{figure}[htbp]
\begin{center}
\includegraphics[width=0.35\textwidth]{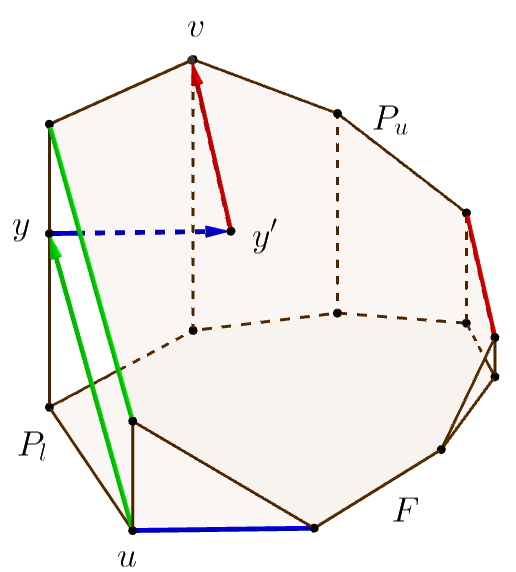}
\caption{Polyhedron $P=P_l$ is \Csimple\ but not wedge-simple: the walk $\veu,\vey,\vey',\vev$ is a non-simple circuit walk in $P'$ -- two new facets are hit in the step from $\vey'$ to $\vev$.} \label{img:wedge_notCsimple}
\end{center}
\end{figure}

\begin{definition}[Wedge-simple]
Let $P$ be a \Csimple\ polyhedron.
$P$ is a {\em [1-]wedge-simple polyhedron with respect to a facet $F$} if a wedge $P'$ on $P$ over $F$ is \Csimple.
$P$ is {\em wedge-simple} if for all facets $F$ a wedge $P'$ on $P$ over $F$ is \Csimple.
We can then recursively define $P$ to be {\em $k$-wedge-simple} for $k \ge 2$, if for all facets $F$ a wedge $P'$ on $P$ over $F$ is $(k-1)$-wedge-simple.
\end{definition}

Note that all wedges over the same facet are affinely isomorphic.
Thus if there is a wedge over a facet that is \Csimple, then so are all wedges over that facet.
This allows the simple wording ``a wedge is \Csimple'' (instead of ``there is a wedge that is \Csimple'') in the definition.
With this terminology, we are ready to prove that a \Csimple\ polyhedron can be perturbed to obtain a wedge-simple polyhedron.

\begin{lemma}\label{wedgesimple}
Let $P$ be a \Csimple\ polyhedron.
Then there is a wedge-simple polyhedron $P^*$ in the same dimension and with the same number of facets with $\diamp_\Circuits(P)\leq \diamp_\Circuits(P^*).$
\end{lemma}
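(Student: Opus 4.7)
The plan is to follow the strategy of Lemma~\ref{csimple}. I would apply Lemma~\ref{csimple}'s argument to produce $P^*$ that is \Csimple\ and satisfies $\diamp_\Circuits(P) \leq \diamp_\Circuits(P^*)$ (with the same dimension and facet count), and then argue separately that every facet $F^*$ of $P^*$ admits a \Csimple\ wedge over it.

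For the first step, since $P$ is already \Csimple, a mild perturbation $\veb \mapsto \veb + \vep$ with $\|\vep\|$ sufficiently small preserves \Csimplicity\ and the diameter bound; indeed the argument of Lemma~\ref{csimple} gives this directly. Crucially, the perturbation does not alter the circuit set of $P$, so the set of circuits of any wedge constructed over a facet of $P^*$ depends only on the ambient matrix and the chosen cutting hyperplane.

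For the second step, fix a facet $F^*$ of $P^*$ and parameterize the wedges over $F^*$ by the cutting hyperplane $H \subset \R^{d+1}$. The coefficients of $H$ (modulo scaling) range over an open subset of $\R^{d+2}$ satisfying the conditions of Definition~\ref{def:wedge}. By Lemma~\ref{lemma:wedgecirc}, the wedge's circuits depend algebraically on $H$: type~(i) and~(ii) circuits are independent of $H$, while type~(iii) circuits (the projections via $\phi$) vary algebraically with $H$. Likewise, vertices of the wedge depend algebraically on $H$ since they are intersections of $d+1$ of its facets. Hence, for each fixed combinatorial type of wedge and each fixed circuit walk of length at most $\diamfd_\Edges(f+1, d+1)$, the condition that some step of the walk enters two or more new facets translates to a polynomial equation in the coefficients of $H$. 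The set of ``bad'' $H$ (for which the wedge is not \Csimple) is therefore a finite union of measure-zero algebraic sets, and a generic $H$ produces a \Csimple\ wedge. Choosing such an $H$ independently for each facet $F^*$ establishes wedge-simplicity of $P^*$.

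The principal obstacle is justifying rigorously that the \Csimplicity\ conditions on the wedge are polynomial in $H$. Lemma~\ref{lemma:wedgecirc}'s explicit characterization is the starting point, but one must trace through the algebraic dependence of the wedge's vertices and the circuit walks on $H$ with some care---the coupling introduced by the projection $\phi$ makes even the directions of type~(iii) circuits functions of $H$. A secondary subtlety is verifying that the number of combinatorial types of wedges over $F^*$ is finite, so that only finitely many algebraic sets arise in total; this holds because $H$'s combinatorial interaction with $P^* \times L$ has only finitely many types.
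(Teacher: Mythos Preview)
Your two-step plan separates the problem cleanly, but the second step does not work: varying the cutting hyperplane $H$ while keeping $P^*$ fixed gives you no genuine freedom to achieve \Csimplicity\ of the wedge.

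The issue is that the parameter space of admissible $H$'s is far smaller than you claim. By Definition~\ref{def:wedge}, $H$ must satisfy $H \cap (P^* \times \{0\}) = F^* \times \{0\}$. If $F^*$ is cut out by $a_F \cdot z = b_F$, this forces $H$ (up to scaling) to be of the form $\{(z,t) : a_F \cdot z + \lambda t = b_F\}$ for a single real parameter $\lambda < 0$; the space of $H$'s is one-dimensional, not an open subset of $\R^{d+2}$. Worse, this one parameter is irrelevant for \Csimplicity: the linear map $T_c : (z,t) \mapsto (z,ct)$ carries the wedge $P'_{\lambda c}$ bijectively onto $P'_\lambda$, maps the facet family to itself, and sends circuits to circuits (the supports of $A'_{\lambda c}\,g$ and $A'_\lambda\,T_c(g)$ coincide row by row). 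Hence circuit walks and their facet-incidence patterns correspond exactly, and $P'_\lambda$ is \Csimple\ if and only if $P'_{\lambda'}$ is, for every $\lambda,\lambda'<0$. So either every wedge over $F^*$ is \Csimple\ or none is, and your genericity argument in $H$ is vacuous.

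This is exactly why the paper proceeds differently: it perturbs the right-hand sides of $P$ itself. The key observation there is that a perturbation of the wedge's facets is the same thing as a perturbation of $P$'s facets (sides correspond to facets $G\neq F$, and the bases to $F$), so Lemma~\ref{csimple} applied to the wedge translates back to a perturbation of $P$. One then handles the facets of $P$ one at a time, using that a sufficiently small further perturbation preserves the \Csimplicity\ of previously constructed wedges. Your first step already has the right idea of perturbing $P$; the repair is to carry that perturbation through to control the wedges as well, rather than trying to fix things afterwards by moving $H$.
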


\begin{proof}
Let $P'$ be a wedge on $P$ over facet $F$, and consider a mild perturbation of the facets of $P'$.
Perturbing a side of $P'$ has the same effect as perturbing the corresponding facet in $P$ before wedging; perturbing either base $P_l$ or $P_u$ has the same effect as perturbing the facet $F$ of $P$.
Thus, it is possible to guarantee some properties of $P'$ by using a mild perturbation of $P$ before the application of the wedge operation.

First, recall Lemma \ref{Csimple} telling us that any polyhedron can be perturbed to get a \Csimple\ polyhedron with at least the same circuit diameter.
Thus if $P'$ is not \Csimple, it can be perturbed mildly, which gives a polyhedron that is \Csimple\ and of at least the same circuit diameter.
But then $P'$ is just a wedge over a mildly perturbed $P$, by the above arguments.
Thus it is always possible to perturb an original polyhedron such that the wedge over a given facet $F$ is \Csimple.

So let now $P$ allow a wedge $P'$ over facet $F$ that is \Csimple.
Further, let $F'\neq F$ be a different facet of $P$ and consider a wedge $P''$ over facet $F'$.
If $P''$ is not \Csimple, again a mild perturbation of $P''$ will make it \Csimple.
This perturbation is also a perturbation of $P$, so that it is necessary to consider what happens with the wedge over facet $F$.

Note that a sufficiently small mild perturbation will maintain \Csimplicity\ using similar arguments as in the proof of Lemma \ref{Csimple}:
we perturb by less than the minimum distance between points in $Y_{P'}$.
But now a perturbation of $P$ is also a perturbation of the wedge $P'$ over $F$, and thus $P'$ stays \Csimple\ for this perturbation.

This means that one can iteratively apply perturbations to the original polyhedron $P$ to guarantee that the wedges over all facets $F$ are \Csimple. With each of these perturbations, all facets for
which there was a \Csimple\ wedge keep this property. There are only a finite number of facets of a given polyhedron. This proves the claim.
\end{proof}

Additionally, Lemma \ref{wedgesimple} transfers to $k$-wedge-simplicity.

\begin{corollary}\label{wedgesimple2}
Let $P$ be a \Csimple\ polyhedron and let $k\in \N$ be given. Then there is a $k$-wedge-simple polyhedron $P^*$ in the same dimension and with the same number of facets with $\diamp_\Circuits(P)\leq \diamp_\Circuits(P^*).$
\end{corollary}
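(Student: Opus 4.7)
The plan is to proceed by induction on $k$, with the base case $k=1$ being precisely Lemma~\ref{wedgesimple}. Assume the statement for $k-1$ and let $P$ be a \Csimple\ polyhedron with $f$ facets.

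For the inductive step, I would exploit the key observation already used in the proof of Lemma~\ref{wedgesimple}: a mild perturbation of a wedge $P'$ on $P$ over a facet $F$ can be realized as a mild perturbation of $P$ itself together with a choice of the defining hyperplane $H$. Thus, fixing a facet $F$ of $P$ and applying the induction hypothesis to some wedge $P'$ over $F$ produces a mild perturbation of $P$ under which some wedge over $F$ becomes $(k-1)$-wedge-simple. Iterating this construction over the finitely many facets $F_1,\ldots,F_f$ exactly as in Lemma~\ref{wedgesimple} yields a perturbed polyhedron $P^*$ for which each facet admits a $(k-1)$-wedge-simple wedge; the iteration succeeds because at step $i$ one can choose the perturbation small enough that the $(k-1)$-wedge-simple wedges already built for $F_1,\ldots,F_{i-1}$ retain this property. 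The circuit diameter bound $\diamp_\Circuits(P)\le \diamp_\Circuits(P^*)$ then follows by the standard closeness argument from the proofs of Lemmas~\ref{csimple} and~\ref{wedgesimple}: for sufficiently small perturbations, every circuit walk realizing the diameter of $P$ has a length-preserving counterpart in $P^*$, so the diameter cannot drop.

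The main obstacle is to verify that $(k-1)$-wedge-simplicity is genuinely generic at every nested level, so that the perturbation arguments can be carried out. Concretely, one must show that (a) the set of mild perturbations of $P$ that fail to admit a $(k-1)$-wedge-simple wedge over any given facet has measure zero, and (b) $(k-1)$-wedge-simplicity is preserved under sufficiently small perturbations. Both facts are established by unfolding the definition along the induction: each level contributes only finitely many \Csimplicity\ conditions (one wedge per facet of the polyhedron at that level), and by the proof of Lemma~\ref{csimple} each such condition fails on a measure-zero set of right-hand-side perturbations. A finite union of measure-zero sets is still of measure zero, so genericity propagates through the recursion, and the overall nested iteration is really just a repeated application of the argument already used for Lemma~\ref{wedgesimple}.
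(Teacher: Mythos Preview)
Your proposal is correct and takes essentially the same approach as the paper: both rely on the fact that a perturbation of a wedge translates to a perturbation of its base, together with the genericity and stability of \Csimplicity\ established in Lemmas~\ref{Csimple} and~\ref{wedgesimple}. The paper phrases the argument more tersely---fixing an arbitrary chain of $k$ wedges $P=P_0,P_1,\dots,P_k$ and propagating a perturbation of $P_k$ back down to $P_0$---while you organize the same idea as an explicit induction on $k$; the only point to be careful about is that your inductive hypothesis should really be the slightly stronger statement (implicit in the proofs of Lemmas~\ref{Csimple} and~\ref{wedgesimple}) that $P^*$ can be taken as an arbitrarily small perturbation of $P$, which you correctly flag when discussing the stability property~(b).
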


\begin{proof}
Let $P=P_0,P_1,\dots,P_k$ be a sequence of wedges $P_{i+1}$ on $P_i$ for $0\leq i<k$ and suppose w.l.o.g. $P_k$ is not \Csimple. This can be amended by a slight perturbation of $P_k$ (Lemma \ref{Csimple}), which translates to a perturbation of $P_{k-1}$, which in turn translates to a perturbation of $P_{k-2}$ and so on up to a perturbation of $P=P_0$. Repeated application of the proof of Lemma \ref{wedgesimple} thus gives the claim. 
\end{proof}

The above statements sum up to the equivalence of the following conjecture to the original formulation in Conjecture \ref{original}.

\conjecture[Wedge-Simplicity]\label{conj:wedgesimple}
{
 	For any $k$-wedge-simple $d$-dimensional polyhedron with $f$ facets and any $k\in \N$ the circuit diameter is bounded above by $f-d$.\\
}

In light of the many applications of wedging over polyhedra in the studies of the combinatorial diameter, Conjecture \ref{conj:wedgesimple} may be useful in its own right. It tells us that we may restrict our studies to \Csimple\ polyhedra for which an arbitrarily large number of wedging operations still gives a \Csimple\ polyhedron.

We conclude this section with two lemmas showing how $k$-wedge-simplicity transfers from a polyhedron $P$ to its faces.
Proofs of these are included in the Ph.D.~thesis of the third author~\cite{y-17}.

\begin{lemma}\label{lemma:wedge_transfer}
Let $P$ be a wedge-simple $d$-dimensional polyhedron and $F$ be any $d'$-face of $P$ for $1 < d' < d$. Then $F$ is also wedge-simple.
\end{lemma}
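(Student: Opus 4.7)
The plan is to verify both parts of wedge-simplicity for $F$: that $F$ itself is \Csimple, and that for each facet $K$ of $F$ there exists a \Csimple\ wedge on $F$ over $K$. Both parts reduce to one structural observation — \Csimplicity\ is inherited by faces of dimension at least two — combined with the fact that a wedge on $F$ over $F\cap G$ arises naturally as a face of a \Csimple\ wedge on $P$ over $G$.

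To establish the structural observation, let $R$ be a $d^*$-face of a \Csimple\ polyhedron $Q$ with $d^*\geq 2$. The circuits of $R$ (as directions in the ambient space) are precisely those circuits of $Q$ which additionally lie in the common kernel of the facet-inequality rows cutting out $R$. Such a direction keeps us within the affine hull of $R$, so the maximal step from a point of $R$ along it is the same whether computed in $R$ or in $Q$. Hence any maximal circuit walk in $R$ is also a maximal circuit walk in $Q$, and by \Csimplicity\ of $Q$ each step enters exactly one new facet $G$ of $Q$. The key claim is that $\dim(R\cap G)=d^*-1$, giving exactly one new facet $R\cap G$ of $R$: if instead $R\cap G$ had codimension $\geq 2$ in $R$, every facet of $R$ incident at $\vey^{(i+1)}$ would be of the form $R\cap G'_j$ with $G'_j \in H^{(i)}_Q$ (a new $G'_j$ would contradict \Csimplicity\ of $Q$, as $G$ is the only new facet of $Q$), so no new facet of $R$ would be entered, contradicting the maximality of the step within $R$. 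Together with the monotonicity bound $\diamfd_\Edges(f_R,d^*)\leq\diamfd_\Edges(f_Q,\dim Q)$, this shows $R$ is \Csimple. In particular, applied with $Q=P$ and $R=F$, it shows $F$ is \Csimple.

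For the wedge-simplicity condition, fix any facet $K$ of $F$ and write $K=F\cap G$ for some facet $G$ of $P$ with $F\not\subseteq G$. By wedge-simplicity of $P$, there is a \Csimple\ wedge $P'=H^\leq\cap(P\times L)$ of $P$ over $G$. Let $G_1,\dots,G_s$ be the facets of $P$ whose intersection equals $F$; none of them is $G$. In $P'$ these correspond to sides $S_i=G_i\times L\cap H^\leq$, and their common face satisfies
\[
S_1\cap\cdots\cap S_s \;=\; (F\times L)\cap H^\leq.
\]
This face meets Definition~\ref{def:wedge} for a wedge on $F$ over $K$: $F\times\{0\}\subset H^\leq$ because $P\times\{0\}\subset H^\leq$; the hyperplane $H$ meets the relative interior of $F\times L$ (inherited from meeting the interior of $P\times L$); and $H\cap(F\times\{0\})=(G\cap F)\times\{0\}=K\times\{0\}$. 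Applying the structural observation to this face of the \Csimple\ polyhedron $P'$, we conclude that $(F\times L)\cap H^\leq$ is itself \Csimple, i.e.\ a \Csimple\ wedge on $F$ over $K$. Since $K$ was arbitrary, $F$ is wedge-simple.

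The main obstacle is making rigorous the ``no new facets of $R$'' contradiction in the structural observation: one must carefully enumerate the facets of $Q$ incident at $\vey^{(i+1)}$, distinguish those whose intersection with $R$ is codimension-one (hence a facet of $R$) from those that are not, and use \Csimplicity\ of $Q$ to argue that every such incident facet of $Q$ other than the unique new $G$ already lies in $H^{(i)}_Q$. A secondary concern is verifying that the slicing hyperplane of the wedge $P'$, when restricted to $\mathrm{aff}(F)\times\R$, still carves out a genuine wedge on $F$ rather than a degenerate flat slice.
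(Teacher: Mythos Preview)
Your approach is essentially the same as the paper's: both show that for each facet $K=F\cap G$ of $F$, the wedge $(F\times L)\cap H^\leq$ on $F$ over $K$ arises as a face of the \Csimple\ wedge $P'$ on $P$ over $G$, and then transfer \Csimplicity\ by observing that circuit walks in a face are circuit walks in the ambient polyhedron. You make this last step explicit as your ``structural observation,'' whereas the paper dispatches it in a single sentence; you also explicitly verify that $F$ itself is \Csimple\ (part of the definition of wedge-simple), which the paper's proof leaves implicit. To exhibit the sub-wedge as a face, the paper uses a supporting hyperplane $H_F\times\R$, while you intersect the side facets $S_i$ corresponding to the $G_i$ cutting out $F$ --- these are equivalent. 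One small remark: your codimension-$\geq 2$ worry in the structural observation is in fact moot, since \Csimplicity\ of $Q$ already forces $Q$ to be simple, and in a simple polyhedron the intersection of a face $R$ with a facet $G\not\supseteq R$ is automatically a facet of $R$ whenever nonempty.
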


\begin{lemma}\label{lemma:kwedge_transfer}
Let $P$ be a $k$-wedge-simple $d$-dimensional polyhedron and $F$ any $d'$-face of $P$ for $1 < d' < d$. Then $F$ is also $k$-wedge-simple.
\end{lemma}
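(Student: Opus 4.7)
The plan is to induct on $k$. The base case $k=1$ is precisely Lemma~\ref{lemma:wedge_transfer}, which already handles the wedge-simple case. For the inductive step I would adopt the hypothesis that, in every dimension, every $d''$-face (with $1 < d'' < \dim Q$) of a $(k-1)$-wedge-simple polyhedron $Q$ is itself $(k-1)$-wedge-simple. Formulating the statement uniformly in the ambient dimension is important, because the inductive step will require applying the hypothesis not to $P$ itself but to a wedge of $P$ at one higher dimension.

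Fix a $k$-wedge-simple $d$-dimensional polyhedron $P$ and a $d'$-face $F$ of $P$ with $1 < d' < d$. To show $F$ is $k$-wedge-simple, I need to exhibit, for an arbitrary facet $G$ of $F$, a wedge on $F$ over $G$ that is $(k-1)$-wedge-simple. As in the proof of Lemma~\ref{lemma:wedge_transfer}, I would write $G = F \cap \tilde{G}$ for some facet $\tilde{G}$ of $P$. By $k$-wedge-simplicity of $P$, there exists a wedge $W$ on $P$ over $\tilde{G}$ that is $(k-1)$-wedge-simple. I would then reuse the same halfspace $H^\leq$ that defines $W$ and intersect it with $F \times L$ to obtain a wedge $W_F$ on $F$ over $G$; the hyperplane calculation from the proof of the previous lemma shows $W_F = (H_F \times \R) \cap W$, so that $W_F$ is a face of $W$ cut out by the supporting hyperplane $H_F \times \R$ of $F$ inside $P \times L$.

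At this point I would verify the dimension bounds needed to apply the inductive hypothesis: $\dim W = d+1$, $\dim W_F = d'+1$, and $1 < d'+1 < d+1$ follows from $1 < d' < d$. Hence $W_F$, being a face of the $(k-1)$-wedge-simple polyhedron $W$ in the permitted dimension range, is itself $(k-1)$-wedge-simple by the inductive hypothesis. Since $G$ was an arbitrary facet of $F$, this produces a $(k-1)$-wedge-simple wedge on $F$ over each of its facets, which is exactly the condition for $F$ to be $k$-wedge-simple.

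I do not anticipate a real obstacle. The geometric content, namely the existence of the face-wedge $W_F$ inside the ambient wedge $W$, is already established in Lemma~\ref{lemma:wedge_transfer}; the only novelty is that we now take the ambient wedge to be $(k-1)$-wedge-simple rather than merely \Csimple, and transport this strengthened property down to the face via the inductive hypothesis. The mildest care point is ensuring the inductive statement is dimension-free so it can be applied to $W$ at dimension $d+1$, which I have built into the hypothesis from the outset.
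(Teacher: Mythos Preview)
Your proposal is correct and follows essentially the same approach as the paper: induct on $k$, reuse the face-wedge construction $W_F \subset W$ from Lemma~\ref{lemma:wedge_transfer}, and pull the $(k-1)$-wedge-simplicity of $W$ down to its face $W_F$ via the inductive hypothesis. The paper starts the induction at $k=2$ rather than $k=1$ and is less explicit about the dimension-free formulation of the hypothesis, but these are cosmetic differences.
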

%
%

\section{The Conjectures}\label{sec:conjectures}

Next, we prove the equivalence of several variants of Conjecture \ref{original}. Most of them are circuit analogues of variants of the Hirsch conjecture for the combinatorial diameter. We begin with a discussion and formal statements for these conjectures before turning to the proof.

\subsection{Non-revisiting circuit walks}\label{sec:nonrevisiting}

One of the most useful variants in the studies of the Hirsch conjecture for the combinatorial diameter is the {\em non-revisiting conjecture}.
A walk is \emph{non-revisiting} if no facet is left during the walk then entered again at a later step; the non-revisiting conjecture was that any two vertices are connected by such a walk.
In particular this means that if two vertices $\veu,\vev$ lie in the same face of a polyhedron, there would be a walk connecting the two vertices that stays in this face and adheres to the given bound.

In an edge walk in a simple polyhedron, at each step exactly one facet is left and exactly one other facet is entered.
The non-revisiting conjecture requires that the entered facet is new, i.e. it has not been left before.
For circuit walks the situation is a bit more complicated.
Circuit steps that are not along edges will leave multiple facets simultaneously, and, in a non-\Csimple\ polyhedron, may enter multiple facets as well.
To transfer the concept of a non-revisiting walk, we consider the facets that are entered during a walk.

Recall that the connection of the non-revisiting conjecture for edge walks and the original Hirsch conjecture, in fact, comes from the `positive' interpretation of the above: one enters exactly one new facet in each step.
As one starts at a vertex, to which $d$ facets are incident, entering a new facet in each step immediately gives the bound $f-d$.
We use this interpretation to give a viable formulation for circuit walks.

\conjecture[Non-revisiting]\label{non revisiting}
{
For any polyhedron $P$ and two vertices $\veu,\vev\in P$, there is a circuit walk from $\veu$ to $\vev$ that enters a new facet in each step, that is, each step produces an active facet that was inactive at all previous steps. \\
}

Each circuit step enters at least one new facet, and may leave any number of old facets.
So generally, it is possible to enter `old' facets in a step as long as one also enters a new facet.
However, for \Csimple\ polyhedra, only exactly one new facet is entered in each step.
Then the above formulation is equivalent to asking for a circuit walk from $\veu$ to $\vev$ that does not enter a facet it left before.


It is easy to see that Conjecture \ref{non revisiting} implies Conjecture \ref{original} with the same arguments as before: one begins at a vertex, to which (at least) $d$ facets are incident, and enters a new facet in each step, which gives a bound of $f-d$.

It is also of interest to state this conjecture for the particular case $f = 2d$:
\conjecture[Non-revisiting $f = 2d$]\label{non revisiting d}
{
For any $d$-polyhedron $P$ with $2d$ facets, and two vertices $\veu,\vev\in P$, there is a circuit walk from $\veu$ to $\vev$ that enters a new facet in each step. \\ 
}

The motivation to explicitly state this special case of Conjecture~\ref{non revisiting} is that we will later find both conjectures to be equivalent.

\subsection{Any start}

In contrast to edge walks, which only walk between vertices of a polyhedron, one may consider circuit walks that begin at a feasible point in a polyhedron that is not a vertex: indeed many steps in circuit walks do not begin at vertices.
To study partial sequences of circuit walks (and for many other reasons), we here present a variant of the conjectures that deals with the necessary generalization of the starting point.
We say that a set of facets is linearly independent if the corresponding outer normals are linearly independent.

\conjecture[Any start]\label{any start}
{
 For any $d$-dimensional polyhedron $P$ with $f$ facets and any finite set $M$ of points in $P$ that includes the set of vertices, there is a circuit walk length from any point $\veu\in M$ to any vertex in $\vev$ that enters a new facet in each step.\\
}

Note the number of active facets $d'$ for any vertex $\veu$ of a $d$-dimensional polyhedron is at least $d$.
For all other points in the polyhedron, $d'<d$.
The above conjecture gives rise to a generalized notion of circuit walk that may start anywhere in the polyhedron, not only at a vertex.
The number of steps of such a circuit walk is bounded by $f-d'$.

Let us briefly turn to an example: Consider a simplex in $\R^d$ and recall it has $d+1$ vertices and $d+1$ facets, which implies $f-d=1$.
It has combinatorial diameter $1$, which transfers to circuit diameter $1$.
However, the number of steps to a vertex can be much larger for a start at a non-vertex.
For example, let a walk begin at a feasible point $\veu$ in the strict interior of a facet $F$.
Then $d'=1$ and $f-d'=d$.
Walking to the unique vertex that is not incident to $F$ requires exactly $d$ steps (if we assume the simplex to be \Csimple\ with respect to a set $M$ of points that includes the starting point).
Figure \ref{fig:simplex} depicts an example in dimension $3$.

\begin{figure}[htbp]
\begin{center}
\includegraphics[width=0.3\textwidth]{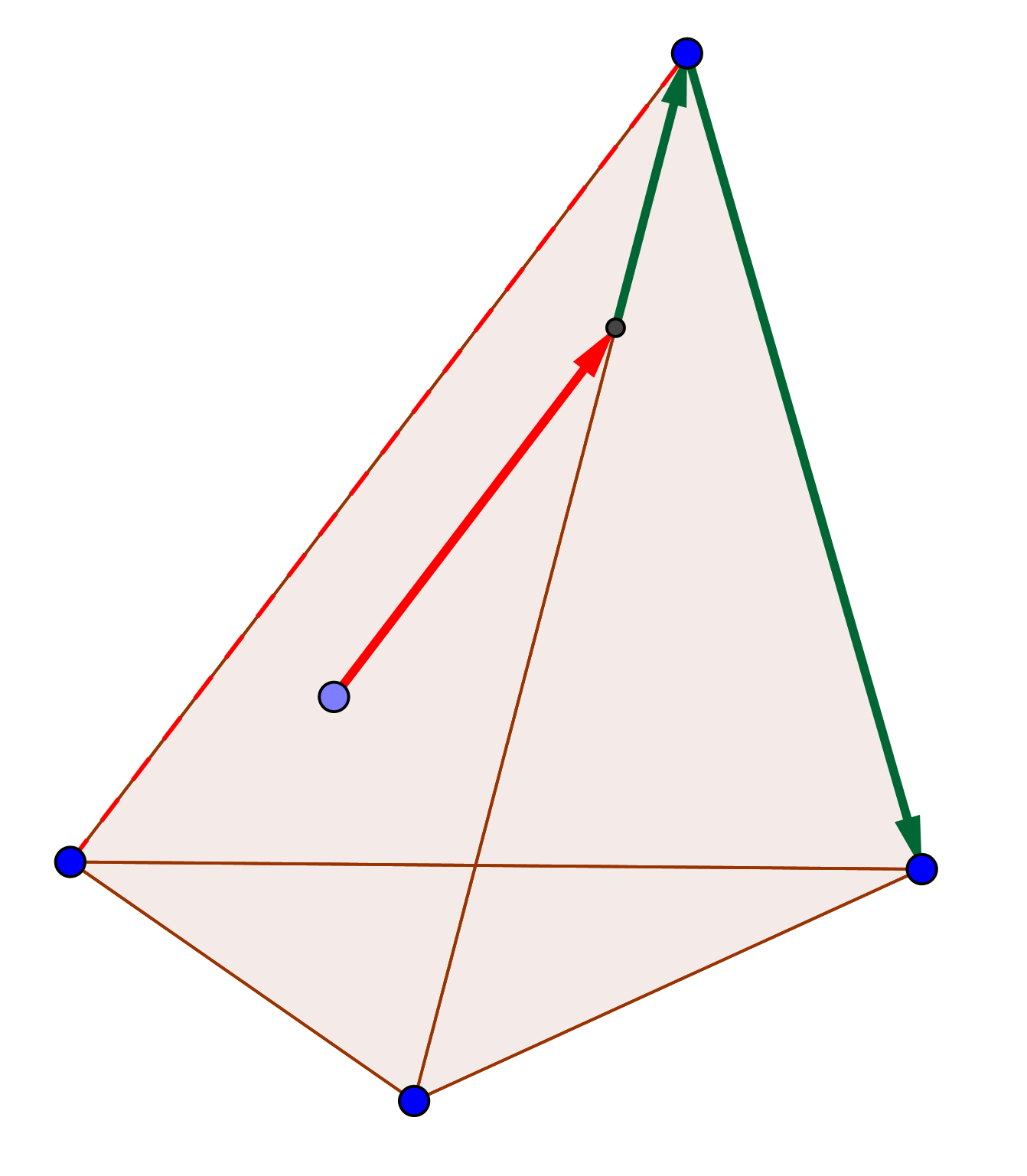}
\caption{A simplex in $\R^3$. The circuit distance of a point on the boundary to any vertex is at most three.} \label{fig:simplex}
\end{center}
\end{figure}

%
%
%

Consider a non-revisiting walk (as in Conjecture~\ref{non revisiting}) starting at a non-vertex.
Picking up a new facet in each step that did not appear before would transfer to a bound of $f-d'$, as only $d'\leq d$ facets are active in the beginning.
It is clear that Conjecture~\ref{any start} is at least as strong as Conjecture~\ref{original}, as it encompasses the corresponding statement for $M$ as the set of all vertices and due to $d'\geq d$ for all vertices.
But Conjecture~\ref{non revisiting} also implies Conjecture~\ref{any start}:

\begin{lemma}\label{lem:nonrev}
Let $P$ be a $d$-dimensional polyhedron with $f$ facets, let $\veu$ be a feasible point in $P$ that is incident to $d'$ linearly independent facets of $P$, and let $\vev$ be a vertex of $P$.
Suppose further that the non-revisiting conjecture (Conjecture \ref{non revisiting}) is true.
Then there is a circuit walk from $\veu$ to $\vev$ that enters a new facet in each step. 
\end{lemma}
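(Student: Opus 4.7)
The plan is to reduce to the vertex-to-vertex case (Conjecture~\ref{non revisiting}) by constructing the walk in two stages. In the first stage I would walk from $u$ to some vertex $u^{\ast}$ of $P$ via circuit steps that never leave any already-active facet; in the second stage I would apply the non-revisiting conjecture directly to the vertex pair $(u^{\ast},v)$ and concatenate. The key reason this concatenation works is that the first stage only gains active facets, so nothing needs to be re-entered later.

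For the first stage, let $F_{0}$ be the minimal face of $P$ containing $u$, so $u$ lies in the relative interior of $F_{0}$. If $\dim F_{0}=0$ we are done immediately; otherwise, because $P$ has a vertex (namely $v$), every nonempty face of $P$ has a vertex, so $F_{0}$ has an edge. Since edges of a face of $P$ are edges of $P$, the direction $g_{0}$ of such an edge is a circuit of $P$, and because $F_{0}$ has a vertex, $g_{0}$ is not a lineality direction of $F_{0}$; hence at least one of $\pm g_{0}$ gives a bounded maximal circuit step from $u$ inside $F_{0}$. Taking such a step lands at a point $u_{1}$ in a strictly smaller face $F_{1}\subsetneq F_{0}$: no facet containing $F_{0}$ is left, and any facet whose hyperplane is hit at $u_{1}$ contains $F_{1}$ but not $F_{0}$, so it was inactive at $u$. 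Iterating the construction yields $u_{0}=u,u_{1},\ldots,u_{k}=u^{\ast}$ with $F_{0}\supsetneq F_{1}\supsetneq\cdots\supsetneq F_{k}=\{u^{\ast}\}$, reaching a vertex in at most $\dim F_{0}\le d-d'$ steps. Throughout stage one no facet is ever left, and each step introduces at least one facet that was inactive at every earlier $u_{j}$, since any newly active facet fails to contain $F_{i}\supseteq\cdots\supseteq F_{0}$.

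Now invoke Conjecture~\ref{non revisiting} on the vertex pair $(u^{\ast},v)$ to obtain a walk $u^{\ast}=u_{k},u_{k+1},\ldots,u_{m}=v$ in which each step enters a facet inactive at all earlier points of this sub-walk. To verify that the concatenation $u_{0},u_{1},\ldots,u_{m}$ enters a new facet at every step, only the second-stage steps require checking. If the facet $H$ entered at some step $i>k$ were active at some $u_{j}$ with $j<k$, then since stage one never leaves a facet, $H$ would still be active at $u_{k}=u^{\ast}$, contradicting the non-revisiting property of stage two; so $H$ is inactive at every prior point of the full walk.

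The main technical point to nail down is the existence of a bounded circuit step tangent to $F_{i}$ at each stage-one iteration; this rests on the three facts that $F_{i}$ has a vertex (inherited from $P$), that edges of $F_{i}$ are edges and hence circuit directions of $P$, and that a line through a relative-interior point of $F_{i}$ must cross the relative boundary in at least one direction when $F_{i}$ has no lineality. The implicit length bound $f-d'$ then follows from the counting that each of the at most $f-d'$ facets inactive at $u$ can be introduced at most once.
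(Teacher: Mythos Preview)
Your proof is correct, but it takes a genuinely different route from the paper's.

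The paper does \emph{not} walk from $u$ to a vertex of $P$. Instead, it modifies the polyhedron: it selects $d-d'$ of the facet normals $a^v_1,\dots,a^v_{d-d'}$ at $v$ that, together with the $d'$ normals at $u$, form a linearly independent set, and then adds the halfspaces $\{x:(a^v_i)^T x \ge (a^v_i)^T u\}$ through $u$. The resulting polyhedron $P'\subset P$ has $u$ and $v$ as vertices and, crucially, the \emph{same} circuit set as $P$ because the new facets are parallel to existing ones. One then applies Conjecture~\ref{non revisiting} directly to the vertex pair $(u,v)$ in $P'$; since every added facet is active at $u$, the ``new'' facet entered at each step of a non-revisiting walk in $P'$ must be an original facet of $P$, which forces each step length in $P'$ to coincide with the maximal step length in $P$. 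Hence the $P'$-walk is already a non-revisiting circuit walk in $P$.

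Your approach instead works entirely inside $P$: a face-descent from $u$ to a vertex $u^\ast$ in which the active-facet set only grows, followed by the conjecture applied to $(u^\ast,v)$. The concatenation argument via monotonicity of active facets in stage one is clean and correct. The trade-off is this: your argument is more self-contained and avoids introducing an auxiliary polyhedron, but the paper's ``add parallel facets through a point'' device is not ad hoc---it is the same mechanism later used (Lemma~\ref{lem:makebounded}, Theorem~\ref{thm:bounded_and_unbounded}) to bound unbounded polyhedra without changing the circuit set, so their proof of Lemma~\ref{lem:nonrev} doubles as a warm-up for that construction. Your proof, while perfectly valid for this lemma, does not set up that later machinery.
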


\begin{proof}
Our strategy for the proof is as follows: We construct a polyhedron $P'\subset P$ such that $\veu,\vev$ are vertices of $P'$.
Then we show that a non-revisiting walk from $\veu$ to $\vev$ in $P'$ transfers to a non-revisiting circuit walk in $P$.

Let $F^v_1,\dots, F^v_{d}$ be $d$ linearly independent facets incident to $\vev$ in $P$ with outer normals $\vea^v_1,\dots,\vea^v_{d}$.
Let further $F^u_1,\dots,F^u_{d'}$ be $d'$ linearly independent facets incident to $\veu$ in $P$ and let $\vea^u_1,\dots,\vea^u_{d'}$ be the corresponding outer normals.
For $d'\geq d$, there is nothing to prove.
For $d' < d$, there are $d-d'$ outer normals among $\vea^v_1,\dots,\vea^v_{d}$, without loss of generality $\vea^v_1,\dots,\vea^v_{d-d'}$, such that the set $\{\vea^u_1,\dots,\vea^u_{d'}, \vea^v_1,\dots,\vea^v_{d-d'}\}$ is linearly independent.

Let now $F^{\geq}_{d'+i}=\{\vex\in \mathbb{R}^d: (\vea^v_i)^T\vex\geq (\vea^v_i)^T\veu\}$ for $i\leq d-d'$.
Note that by definition of the $\vea_i$, $(\vea^v_i)^T\vev\geq (\vea^v_i)^T \veu$ for all $i\leq d-d'$.
Thus $P'=P\cap \bigcap\limits_{i=1}^{d-d'} F^\geq_i$ contains both $\veu$ and $\vev$, and both are vertices of $P'$. 
Informally $P'$ is the intersection of $P$ with a cone (starting at $\veu$) of facets that are parallel to facets incident to $\vev$.

By validity of Conjecture \ref{non revisiting} there is a walk from vertex $\veu$ to vertex $\vev$ in $P'$ that is non-revisiting.
As $P$ and $P'$ only differ in facets incident to the starting point $\veu$ of the walk, this implies that in each step of such a walk there is a facet from the original polyhedron $P$ that bounds the step length.
Thus the corresponding walk is also a circuit walk in $P$ and it is non-revisiting in $P$.

\end{proof}

We obtain the following equivalence.

\begin{corollary}
The `any start' conjecture (Conjecture \ref{any start}) is true if and only if the non-revisiting conjecture (Conjecture \ref{non revisiting}) is true.
\end{corollary}


\subsection{Dantzig figures and the circuit $d$-step conjecture}

Next, we consider the connection of Conjecture \ref{original} to the so-called $d$-step conjecture and Dantzig figures.
It is well known that for the maximal combinatorial diameter of $d$-dimensional polyhedra, it suffices to consider polyhedra with $2d$ facets.
The maximal value of $f-d$ is realized in a polyhedron with $2d$ facets. This leads to the circuit equivalent of the $d$-step conjecture.

\conjecture[$d$-step]\label{d-step}
{
 For any $d$-dimensional polyhedron with $2d$ facets the circuit diameter is bounded above by $d$.\\
}

With $f=2d$, clearly $f-d=d$.
Conjecture \ref{d-step} treats a special class of polyhedra, and thus is a specialization of Conjecture \ref{original}.
But just as for the combinatorial diameter, we will see that both conjectures are equivalent.

For the combinatorial diameter, one may even restrict the studies to the so-called {\em Dantzig figures}.
First, let us define these figures. We follow \cite{ykk-84}.

\begin{definition}[Dantzig figure]\label{def:dantzig}
Let $P$ be a $d$-dimensional polyhedron with $2d$ facets, of which exactly $d$ are incident to a vertex $\veu$ and the other $d$ are incident to a vertex $\vev$.
Then the tuple $(P,\veu,\vev)$ is a {\em Dantzig figure}.
\end{definition}

Essentially, Dantzig figures are the intersection of two $d$-dimensional cones of $d$ facets.
For the combinatorial diameter, it suffices to consider the distances of $\veu$ and $\vev$ in such a Dantzig figure to resolve the Hirsch conjecture.
We state the circuit analogue.

\conjecture[Dantzig figure]\label{dantzig}
{
 For any $d$-dimensional Dantzig figure and vertices $\veu,\vev$ not sharing a facet, the circuit distance from $\veu$ to $\vev$ is bounded above by $d$.\\
}

Note that for a \Csimple\ Dantzig figure, the circuit distance from $\veu$ to $\vev$ is at least $d$.
If it is equal to $d$, then a corresponding walk is non-revisiting. 

A proof of the equivalence of the original Hirsch conjecture and the Dantzig figure variant for the combinatorial diameter relies heavily on the wedge construction.
More precisely, it makes fundamental use of the fact that edge walks in a wedge project down to edge walks in the original polyhedron.
As we saw in Section $2$, this is not true for circuit walks.
We turn to this issue in more detail after Theorem \ref{maintheorem} in the next section.


\subsection{On the relation of the conjectures}\label{sec:equivalences}

We now prove a sequence of implications relating Conjectures \ref{original}, \ref{non revisiting}, \ref{non revisiting d}, and \ref{d-step} (and their respective bounded versions).
The proof methods are inspired by the corresponding proof for the combinatorial diameter in \cite{kw-67} and \cite{ykk-84}, but we have to pay significantly more attention to technical detail.

First, we prove two lemmas about the circuit diameter of a product of two polyhedra.

\begin{lemma}\label{product}
	Let $P = \{\vex \in \R^{d_1} : A\vex \geq \veb\}$ and $Q = \{\vex \in \R^{d_2} : C\vex \geq \vew\}$ be pointed polyhedra (i.e.~they have at least one vertex).
Then $\Cir(P \times Q) = (\Cir(P) \times \{\zero\}) \cup (\{\zero\} \times \Cir(Q))$.
\end{lemma}
\begin{proof}
	By definition, $\Cir(P)$ consists of vectors $\veg \ne \zero$ for which $(A\veg)$ is support-minimal in $K_P = \{A\veg : \veg \in \R^{d_1} \ \backslash \ \{\zero\}\}$.
	Similarly, $\Cir(Q)$ are those vectors $\veh \ne \zero$ such that $(C\veh)$ is support-minimal in $K_Q = \{C\veh : \veh \in \R^{d_2} \ \backslash \ \{\zero\}\}$. We would like to characterize the set of circuits $\Cir(P \times Q)$.

Recall that \[ P \times Q = \left\{(\vex,\vey) \in \R^{d_1 + d_2} : A\vex \geq \veb, C\vey \geq \vew \right\}. \]

\noindent Let $M=\pmat{A & \zero \cr \zero & C}$.
Then $\Cir(P \times Q)$ is composed of vectors $(\veg,\veh) \in \R^{d_1 + d_2}$ such that \[\pmat{A & \zero \cr \zero & C}\pmat{\veg \cr \veh} = M\pmat{\veg \cr \veh} = \pmat{A\veg \cr C\veh}\] is support-minimal in the set \[ K_{PQ} = \left\{M\pmat{\veg \cr \veh} : \veg \in \R^{d_1} \ \backslash \ \{\zero\},\veh \in \R^{d_2} \ \backslash \{\zero\} \right\}   .\]

\noindent First, we claim that \[\veg \in \Cir(P) \iff \pmat{\veg \cr \zero} \in \Cir(P \times Q).\]
This follows from the fact that $A\veg$ is support-minimal in $K_P$ if and only if $M \pmat{\veg \cr \zero} = \pmat{A\veg \cr \zero}$ is support-minimal in $K_{PQ}$. Note that the assumption of
pointedness of $P$ and $Q$ is important here, since it implies that $P$ and $Q$ only have a trivial lineality space. This means that there is no nonzero vector $\veg \in \R^{d_1}$ such that $A\veg = \zero$, that is, $A\veg$ has nonempty support as long as $\veg \ne \zero$.
Similarly, $C\veh \ne \zero$ for nonzero $\veh \in \R^{d_2}$.

Hence, given a fixed nonzero $\veg \in \R^{d_1}$ for which $A\veg$ is support-minimal, $\supp \pmat{A\veg \cr \zero}$ cannot contain $ \supp\pmat{A\veg \cr C\veh}$. So, $M\pmat{\veg\cr\zero}$ is support-minimal in $K_{PQ}$, implying $\pmat{\veg\cr\zero} \in \Cir(P \times Q)$. Thus $\Cir(P) \times \{\zero\} \subseteq \Cir(P \times Q)$.


The argument for $\{\zero\} \times \Cir(Q) \subseteq \Cir(P \times Q)$ is similar.
To show that $\Cir(P \times Q)$ has no other circuits, we only need to prove that if $\pmat{\veg\cr\veh} \in \R^{d_1 + d_2}$ where both $\veg$ and $\veh$ are nonzero, then $\pmat{\veg\cr\veh}$ cannot be a circuit of $P \times Q$.
But this is immediate, since \[\supp \left(M\pmat{\veg\cr\zero}\right) \subsetneq \supp\left(M\pmat{\veg\cr\veh}\right)\] again because pointedness implies $C\veh \ne \zero$ for nonzero $\veh$.
The result follows.
\end{proof}

We obtain an immediate consequence on the circuit diameter of the product $P \times Q$.

\begin{lemma}\label{circdiamprod}
$\Deltac(P \times Q) = \Deltac(P) + \Deltac(Q)$.
\end{lemma}
\begin{proof}
Lemma~\ref{product} implies that when taking a step along a circuit direction in the product $P \times Q$, the position in one of the factors remains the same.
Hence, any circuit walk in $P \times Q$ can be decomposed into two circuit walks, one in $P$ and one in $Q$, giving the bound.
\end{proof}

Now, we are ready to discuss the relation of the conjectures.

{
\renewcommand{\thetheorem}{\ref{maintheorem}}
\begin{theorem}
Consider the following statements:
\begin{enumerate}[(1)]
\item Let $\veu,\vev$ be two vertices of a \Csimple\ polyhedron $P$.
	Then there is a non-revisiting circuit walk from $\veu$ to $\vev$.
\item Let $\veu,\vev$ be two vertices of a \Csimple\ $d$-dimensional polyhedron $P$ with $2d$ facets.
	Then there is a non-revisiting circuit walk from $\veu$ to $\vev$.
\item $\diamfd_\Circuits(f,d)\leq f-d$ for all $f\geq d$.
\item  $\diamfd_\Circuits(2d,d)\leq d$ for all $d$.
\end{enumerate}

Then $(2) \iff (1) \Rightarrow (3) \iff (4)$.
\end{theorem}
\addtocounter{theorem}{-1}
}

\begin{proof}
We prove this sequence of implications by showing first $(2) \Leftarrow (1) \Rightarrow (3) \Rightarrow (4)$.
Then we show $(2) \Rightarrow (1)$ and $(3) \Leftarrow (4)$.

$(2) \! \Leftarrow (1)$: 
If the non-revisiting conjecture holds for general $(f,d)$, then it holds for the particular choice of $f = 2d$.

$(1) \Rightarrow (3)$:
First, recall that $\diamfd_\Circuits(f,d)$ is realized by a \Csimple\ polyhedron $P$ by Corollary~\ref{Csimple-reduce}.
In a non-revisiting circuit walk from vertex $\veu$ to $\vev$, a new facet is entered in each step.
As $P$ is \Csimple, this is exactly one new facet per step.
The vertex $\veu$ is incident to exactly $d$ facets, thus there are at most $f-d$ steps.

$(3) \Rightarrow (4)$: $\diamfd_\Circuits(2d,d)\leq d$ just states the special case of $\diamfd_\Circuits(f,d)\leq f-d$ for $f=2d$.


Finally, we use a construction to prove $(2) \Rightarrow (1)$ and $(3) \Leftarrow (4)$; let $P$ be a $d$-dimensional polyhedron with $f$ facets.
Then, let \[Q = P \times [0,+\infty)^{f-2d}\]

Taking the product of $P$ with $[0,+\infty)$ increases the number of facets by $1$ and the dimension by $1$.
So the unbounded polyhedron $Q$ is in dimension $d + (f-2d) = f-d$, and has $f + (f-2d) = 2(f - d)$ facets.
By (4), $\Deltac(Q) \leq f-d$.

Also, by Lemma~\ref{circdiamprod}, $\Deltac(Q) = \Deltac(P) + \Deltac\left([0,+\infty)^{f-2d}\right)$.
But the polyhedron $[0,+\infty)$ has only one vertex and hence has circuit diameter $0$.
Thus, $\Deltac(Q) = \Deltac(P)$, which yields $\Deltac(P) \leq f-d$, which is (3).

In addition, any vertex-vertex circuit walk in $Q$ stays inside the factor $P \times \{\zero\}$, since any circuit step that increases one of the last $(f-2d)$ coordinates is unbounded.
By (2), given any two vertices $\veu \times \{\zero\}$ and $\vev \times \{\zero\}$ of $Q$ there is a non-revisiting circuit walk of length at most $f-d$ between them -- this is also a non-revisiting circuit walk between the corresponding vertices $\veu$ and $\vev$ in $P$, implying (1). 

\end{proof}

Note that if we restrict our attention to bounded polytopes then we can restate (1) to (4) in Theorem~\ref{maintheorem} as
\begin{enumerate}[(1b)]
\item Let $\veu,\vev$ be two vertices of a \Csimple\ polytope $P$.
	Then there is a non-revisiting circuit walk from $\veu$ to $\vev$.
\item Let $\veu,\vev$ be two vertices of a \Csimple\ $d$-dimensional polytope $P$ with $2d$ facets.
	Then there is a non-revisiting circuit walk from $\veu$ to $\vev$.
\item $\diamfd_\Circuits^b(f,d)\leq f-d$ for all $f\geq d$.
\item  $\diamfd_\Circuits^b(2d,d)\leq d$ for all $d$.
\end{enumerate}

The proof for $(2) \Leftarrow (1) \Rightarrow (3) \Rightarrow (4)$ carries over and gives us $(2b) \Leftarrow (1b) \Rightarrow (3b) \Rightarrow (4b)$.
However, the construction $Q = P \times [0,+\infty)^{f-2d}$ produces an unbounded polyhedron and so cannot be adapted to the bounded case -- so we do not have the corresponding implications $(2b) \Rightarrow (1b)$ nor $(3b) \Leftarrow (4b)$.

Observe that we are able to go from $(f,d)$ to $(2f-2d,f-d)$ in the above construction because each new factor increases both dimension and number of facets by one.
This is what the wedge construction in Definition~\ref{def:wedge} is used for in the combinatorial proof \cite{kw-67}; taking the wedge on $P$ over a facet also increases both dimension and number of facets by 1, while maintaining boundedness.

The classical proof in the combinatorial setting involves one additional statement on Dantzig figures (recall Definition~\ref{def:dantzig} and Conjecture~\ref{dantzig}):
\begin{quote}
(*) If $(P,\veu,\vev)$ is a $d$-dimensional Dantzig figure, then there is a circuit walk of length at most $d$ from $\veu$ to $\vev$.
\end{quote}
In the proof, one starts with a polyhedron $P = F_0$. Wedging repeatedly yields a sequence of polyhedra $F_1,\ldots,F_k$ with increasing dimension until a Dantzig figure is obtained. The edge walk in the Dantzig figure is non-revisiting, and it projects down to a non-revisiting edge walk in $P$.


In an attempt to adapt this idea for the circuit diameter, we introduced the concept of $k$-wedge-simplicity because it is necessary to ensure \Csimplicity\ of the final Dantzig figure $F_k$ produced; this gives a non-revisiting circuit walk in the Dantzig figure, which by (*) has length at most the dimension.
However this might not project down to a (non-revisiting) circuit walk in the intermediate wedge constructions, or the starting polyhedron itself. We first observed this problem due to hitting the strict interior of the upper base; recall Figure~\ref{img:wedge_circuit}. We have observed that the wedge operations in a sequence of wedges can be ordered arbitrarily, so hitting the strict interior of the upper base can be avoided. However, the same effect may also arise in the sides of the wedge, see Figure~\ref{fig:wedge_nonrevisit}. Due to this, the freedom in the ordering of the wedge operations does not help. For now, the relationship of Conjecture \ref{dantzig} to the other conjectures remains open.

\begin{figure}[htbp]
	\centering
 \begin{minipage}{.4\textwidth}
        \centering
	\includegraphics[width=0.9\linewidth]{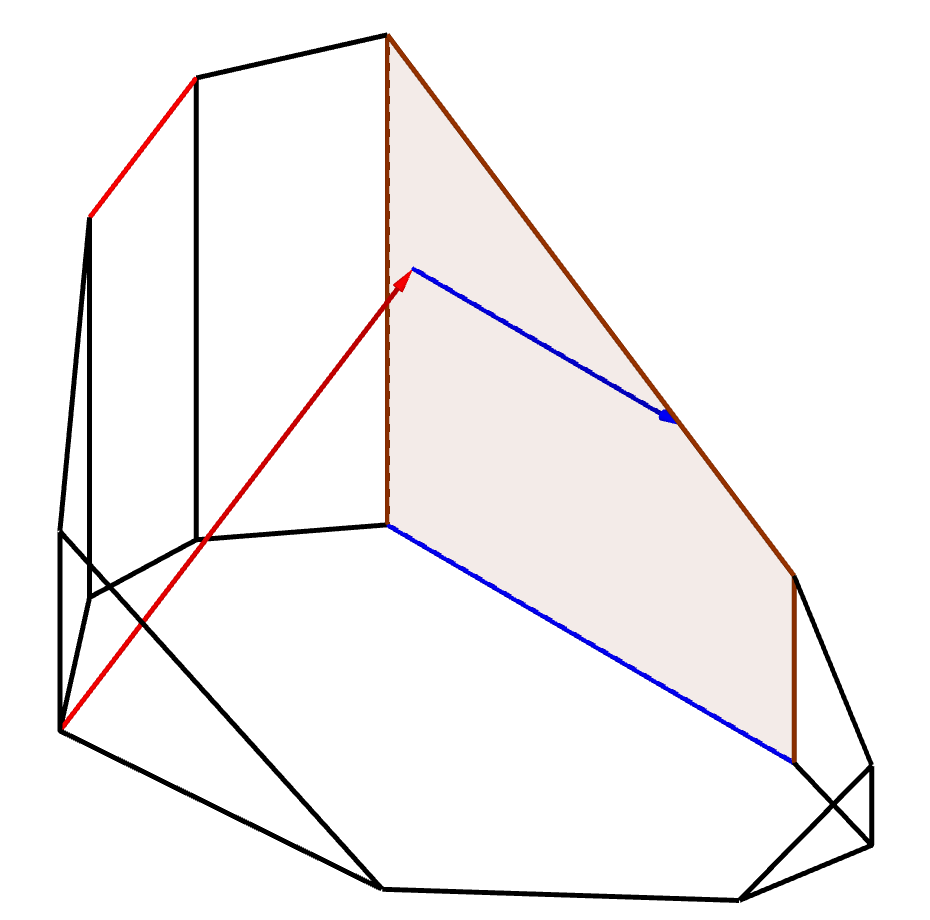}
    \end{minipage}%
    \quad
 \begin{minipage}{.4\textwidth}
        \centering
	\includegraphics[width=0.9\linewidth]{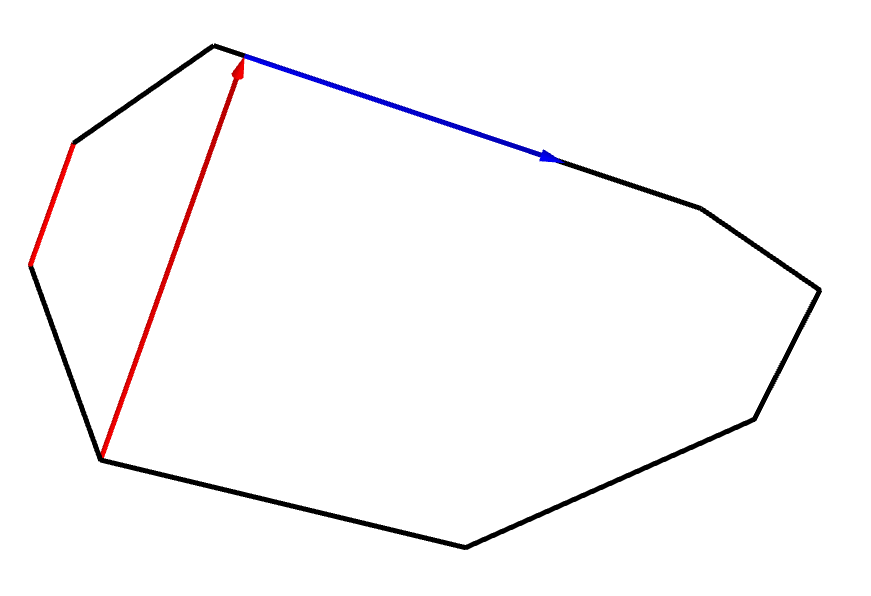}
    \end{minipage}%
    \caption[Non-revisiting circuit walks do not always transfer from a wedge to the original polyhedron]{The circuit walk in the wedge $P'$ on the left is non-revisiting. However it does not project to a circuit walk in $P$.}\label{fig:wedge_nonrevisit}
\end{figure}



\subsection{A connection of unbounded and bounded circuit diameters}

In the proof of Lemma \ref{lem:nonrev}, we added extra facets incident to a boundary point $\veu$ of $P$ to obtain a polyhedron $P'\subset P$ in which $\veu$ is a vertex.
The added facets were parallel to existing facets, such that $\mathcal{C}(P)=\mathcal{C}(P')$.
By performing a similar construction it is possible to transform an unbounded polyhedron $P$ to a bounded polytope $P'$ with $\mathcal{C}(P)=\mathcal{C}(P')$ and without cutting off any vertices.
We will do so by adding facets through $\veu$ that are parallel to facets through a particular vertex $\vev$.
In doing so, we obtain an intimate connection of the circuit diameters of bounded polytopes and unbounded polyhedra.

In the following, for a facet with outer normal $\vea_i$, we call a facet with outer normal $-\vea_i$ an {\em opposite facet}.
We begin by examining how many opposite facets have to be added to a single vertex of an unbounded polyhedron $P$ to obtain a bounded polytope $P'$.
Here we say a facet {\em blocks} an edge direction incident to vertex $\vev$ if the half-line starting at $\vev$ in edge direction intersects the facet.
Essentially, an edge direction only is unbounded if there is no blocking facet.

\begin{lemma}\label{lem:makebounded}
Let $P$ be a $d$-dimensional unbounded polyhedron with $f$ facets, and let $\veu,\vev$ be two vertices of $P$ that do not share a facet.
Then there is a bounded polyhedron $P'\subset P$, where $P'$ is constructed by adding at most $d-1$ facets incident to $u$ that are opposite to facets incident to $\vev$.
By this construction, $\veu,\vev\in P'$ are vertices and $\mathcal{C}(P)=\mathcal{C}(P')$. 
\end{lemma}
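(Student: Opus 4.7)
The plan is to localize the argument at $v$ by selecting a basis of $v$-facet outer normals aligned with a bounded edge direction. First I note that $v$ has a bounded edge: since $u\neq v$ are both vertices of the pointed polyhedron $P$, the tangent cone $T_v$ strictly contains the recession cone $C$ (otherwise every ray from $v$ in $T_v=C$ stays in $P$, forcing $v$ to be the only vertex), so $T_v$ has an extreme ray not in $C$, giving a bounded edge direction $e$ at $v$. The $v$-facets tight along $e$ have outer normals spanning a $(d-1)$-dimensional subspace (since $e$ is an extreme ray of $T_v$); pick $d-1$ linearly independent such normals $a_1,\ldots,a_{d-1}$, and extend to a basis of $\R^d$ by choosing a further $v$-facet outer normal $a_d$ (possible because $v$-facet normals span $\R^d$). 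By linear independence $a_d^Te\neq 0$, and local feasibility of $e$ at $v$ gives $a_d^Te\leq 0$, so $a_d^Te<0$.

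For $i=1,\ldots,d-1$ let $H_i=\{x:\,a_i^Tx\geq a_i^Tu\}$, a half-space with outer normal $-a_i$ whose boundary passes through $u$. Set $P'=P\cap H_1\cap\cdots\cap H_{d-1}$, adding $d-1$ opposite-facet candidates incident to $u$. Both $u,v\in P'$: the point $u$ lies on the boundary of each $H_i$ by construction, while $v$ satisfies $a_i^Tu<a_i^Tv$ strictly, since $u$ does not lie on the $v$-facet with outer normal $a_i$ (using the hypothesis that $u,v$ share no facet), so $v$ is in the interior of each $H_i$. Near $v$ all added constraints are slack, so $P$ and $P'$ coincide locally at $v$, and $v$ retains its tight facets as a vertex of $P'$. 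Likewise $u$ remains a vertex of $P'$ via its original $d$ linearly independent tight facets.

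The central step is proving that $P'$ is bounded. Its recession cone is $C'=C\cap\{y:a_i^Ty\geq 0,\,i=1,\ldots,d-1\}$. Since each $a_i$ is an outer normal of a facet of $P$, one has $a_i^Ty\leq 0$ on $C$, hence $a_i^Ty=0$ on $C'$ for $i\leq d-1$. Thus $C'=C\cap L$ with $L=\{y:a_1^Ty=\cdots=a_{d-1}^Ty=0\}$, a line through the origin containing $e$. Every nonzero element of $L$ is a positive multiple of $e$ or $-e$. Now $e\notin C$ because $e$ is a bounded edge direction, and $-e\notin C$ because $a_d^T(-e)>0$ violates the recession inequality $a_d^Ty\leq 0$. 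Thus $C'=\{0\}$ and $P'$ is bounded.

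It remains to confirm $\mathcal{C}(P)=\mathcal{C}(P')$. The constraint matrix of $P'$ is obtained from that of $P$ by appending the rows $-a_1,\ldots,-a_{d-1}$, each a negation of an existing row. For any $g\in\R^d$, an index of an appended row lies in the support of the augmented matrix applied to $g$ iff the corresponding original index lies in the support of the original matrix applied to $g$. Hence support-minimality is preserved and the set of circuits is unchanged. The main subtlety is setting up the basis $a_1,\ldots,a_d$ so that the line $L$ aligns exactly with the bounded edge $e$; once this alignment is arranged, the recession cone calculation and the remaining verifications go through cleanly, even when $v$ is non-simple.
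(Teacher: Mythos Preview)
Your proof is correct and follows essentially the same construction as the paper: both select a bounded edge at $v$, take $d-1$ linearly independent $v$-facet normals tight along that edge, and add their opposite facets through $u$. Your presentation differs only in minor ways---you find the bounded edge via $T_v\supsetneq C$ rather than graph connectivity, and you verify boundedness by computing the recession cone $C'=C\cap L$ directly rather than by ``blocking'' the remaining extreme rays of a simple cone---but the underlying argument is the same.
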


\begin{proof}
By construction, $P'$ still contains both $\veu$ and $\vev$, as all facets that are added are opposite facets of facets that are incident to $\vev$.
In particular, the facets are parallel to existing ones, so $\mathcal{C}(P)=\mathcal{C}(P')$.
Thus it suffices to prove that at most $d-1$ facets are necessary to create a bounded polytope.

Consider the recession cone of $P$.
In particular,  it is contained in the inner cone of $\vev$.
Thus it suffices to block the edge directions of the inner cone in $P'$ by the addition of extra facets.
A simple way to do so would be to add opposite facets incident to $\veu$ to {\em all} facets incident to $\vev$.
In fact, this would give a bounded box that contains $P'$.

However, not all of these facets are necessary.
Let $S$ be a simple cone coming from the selection of exactly $d$ linearly independent facets of the $d^*\geq d$ facets incident to $\vev$.
Then $S$ contains the inner cone and it suffices to block the corresponding $d$ edge directions of $S$. 

The graph of $P$ is connected and thus there is an edge incident to $\vev$ that leads to a neighboring vertex.
It is possible to choose the facets $F_1,\dots,F_d$ for $S$ as a superset of those $d-1$ facets that define such an edge.
This implies that we only have to block $d-1$ edge directions to validate the claim.

Let now $\vece$ be one of the edge directions of $S$ incident to $\vev$, where $\vece$ is defined by the intersection of $d-1$ facets.
There is exactly one facet $F_i$ with outer normal $\vea_i$ such that $\vea_i^T\vece<0$.
By adding the opposite facet incident to $\veu$ of $F_i$, the corresponding edge direction is blocked, i.e. it is not unbounded anymore.
By doing this for all $d-1$ edge directions that have to be blocked, one obtains a bounded polytope.
\end{proof}

Note that the number of facets that are necessary for the construction may be lower than $d-1$ if the recession cone is of lower dimension than $d$ or if multiple edges lead from $v$ to neighbors in the graph of $P$.
Lemma \ref{lem:makebounded} is our key ingredient to connect the maximal circuit diameters of unbounded $d$-dimensional polyhedra and the maximal circuit diameter of bounded $d$-dimensional polytopes.
This connection is based on validity of the non-revisiting conjecture for the bounded polytopes.

{
\renewcommand{\thetheorem}{\ref{thm:bounded_and_unbounded}}
\begin{theorem}
If all $\Circuits$-simple bounded $(f',d')$-polytopes with $f'\leq f+d-1$ and $d'\leq d$ satisfy the non-revisiting conjecture (Conjecture \ref{non revisiting}), then $\diamfd_\Circuits^u(f,d)\leq f-1$.
\end{theorem}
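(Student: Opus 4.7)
The plan is to reduce the bound on the circuit diameter of an unbounded polyhedron $P$ with $f$ facets in dimension $d$ to the assumed non-revisiting conjecture on a bounded polytope produced by Lemma~\ref{lem:makebounded}, and then to lift the resulting walk back to $P$ using the key observation already exploited in the proof of Lemma~\ref{lem:nonrev}. Let $u, v$ be two vertices of $P$; the goal is $\dist_\Circuits(u, v) \leq f - 1$, which I would establish by induction on $d$, with trivial low-dimensional base cases.

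The first step is to dispose of the case where $u$ and $v$ share a common facet $F$. Here $F$ is a polyhedron of dimension $d - 1$ with at most $f - 1$ facets; the circuits of $F$ are circuits of $P$ (the extra equality defining $F$ contributes a zero entry on directions parallel to $F$, so support-minimality is preserved), and any step parallel to $F$ has the same maximum length in $F$ as in $P$. Consequently every circuit walk in $F$ is a valid circuit walk in $P$. If $F$ is unbounded I would invoke the inductive hypothesis; if $F$ is bounded I would first perturb $F$ to \Csimplicity\ via Lemma~\ref{csimple} and then apply the non-revisiting hypothesis. Either way, $\dist_\Circuits(u, v; F) \leq f - 2$.

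In the main case, $u$ and $v$ share no facet, so Lemma~\ref{lem:makebounded} produces a bounded polytope $P' \subseteq P$ of dimension $d$ with at most $f+d-1$ facets, keeping $u, v$ as vertices and with $\Circuits(P) = \Circuits(P')$. Assuming $P'$ is \Csimple, the theorem's hypothesis supplies a non-revisiting circuit walk from $u$ to $v$ in $P'$; since $u$ is incident to at least $d$ facets of $P'$ and each step in a \Csimple\ polytope enters exactly one new facet, the walk has length at most $(f+d-1) - d = f-1$. All added facets of $P'$ pass through $u$, hence are active at the start of the walk, so by non-revisiting none of them can be the new facet entered at any later step. Therefore at least one facet of $P$ is first met at the end of each step, which forces the maximum step length in $P'$ to coincide with that in $P$, and the walk lifts verbatim to a maximal circuit walk in $P$ of length at most $f - 1$.

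The main obstacle I anticipate is the \Csimplicity\ of $P'$. To apply the hypothesis one needs $P'$ to be \Csimple; if it is not, a perturbation is needed, but it must simultaneously keep $u$ and $v$ as vertices of $P'$ and keep every added facet incident to $u$---otherwise the lifting step fails. Since the added hyperplanes are fully determined once $u$ and the $v$-facet normals are fixed, the perturbation has to be orchestrated through perturbations of $P$ itself, and propagated in a controlled way to the added facets, so that the whole configuration becomes \Csimple\ while preserving the structural requirements of Lemma~\ref{lem:nonrev}.
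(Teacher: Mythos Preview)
Your proposal is correct and follows essentially the same route as the paper: reduce to the case where $u$ and $v$ share no facet, apply Lemma~\ref{lem:makebounded} to get a bounded $P'$ with at most $f+d-1$ facets and the same circuits, invoke the non-revisiting hypothesis, and lift the walk back to $P$ using that the added facets are all active at $u$ and hence never the blocking facet. The obstacle you anticipate is exactly the one the paper addresses; it resolves it by perturbing $P'$ directly (rather than $P$) after first fixing a set of $d$ facets containing all the added ones and slightly relaxing the others, which guarantees a vertex $u'$ of the perturbed polytope incident to every added facet.
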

\addtocounter{theorem}{-1}
}


\begin{proof}
Let $P$ be an unbounded $d$-dimensional polyhedron with $f$ facets and let $\veu,\vev$ be two of its vertices.
We assume $\veu$ and $\vev$ do not share a facet; otherwise consider the minimal face that contains $\veu$ and $\vev$ in place of $P$.
Further, let $P'$ be constructed as in the proof of Lemma \ref{lem:makebounded}, where new facets are added to $u$.
Then $P'$ has at most $f+(d-1)$ facets.
Now perturb polytope $P'$ to obtain the $\Circuits$-simple polytope $P''$.
Let $\veu'$ be a vertex of $P''$ that corresponds to $\veu$ in the non-perturbed $P'$, and to which all (at most $d-1$) extra facets are incident.
(The existence of such a $\veu'$ in $P''$ can be guaranteed by first fixing a set of $d$ facets that contains all extra facets and relaxing all other facets slightly.
Any subsequent perturbation then keeps the single point of intersection $\veu'$ of these facets feasible, i.e. $\veu'$ is a vertex of $P''$.)
Let further $\vev'$ be a vertex of $P''$ that corresponds to $\vev$ in $P'$.

Now consider a non-revisiting circuit walk from $\veu'$ to $\vev'$ in $P''$.
As it is non-revisiting, none of the extra facets is the only blocking facet in any step -- in other words the step length is always bounded by one of the original facets.
This means that the circuit walk transfers to a circuit walk from $\veu$ to $\vev$ in $P$, with the same number of steps.
The circuit walk thus has length at most $f-1$, as none of the initial $d$ facets is revisited. 

\end{proof}
Note that the constructed walk for the unbounded polyhedron $P$ may be revisiting. An interesting special case arises for Dantzig figures. For this case, Theorem \ref{thm:bounded_and_unbounded} can be refined.

\begin{corollary}\label{cor:Dantzig}
If all $d$-dimensional bounded spindles $P(\veu,\vev)$ with $2d-1$ facets incident to $\veu$ and $d$ facets incident to $\vev$ have a non-revisiting circuit walk from $\veu$ to $\vev$, then all unbounded $d$-dimensional Dantzig figures $(P',\veu',\vev')$ have a non-revisiting circuit walk from $\veu$ to $\vev$.
\end{corollary}

Note the non-revisiting condition in the above corollary is needed to transfer a circuit walk in the bounded polytope 
to be a circuit walk in the unbounded polyhedron. In dimension $4$, all spindles have length at most $4$~\cite{sst-12}. 
Showing that such a walk can be realized in a non-revisiting manner gives the circuit $4$-step conjecture for 
bounded and unbounded polyhedra.  In the following section, we give two proofs of the circuit $4$-step conjecture:
first by a careful analysis of the Klee-Walkup polyhedron $U_4$ and second via Corollary~\ref{cor:Dantzig}
by showing that $4$-spindle walks can in fact be made non-revisiting.

\section{The Circuit 4-step Conjecture}\label{sec:4step}

\subsection{Proof via the Klee-Walkup polyhedron}

The first unbounded counterexample to the Hirsch conjecture was given by Klee and Walkup in \cite{kw-67}, where they constructed a 4-dimensional polyhedron with 8 facets and combinatorial diameter 5. In the extended abstract \cite{sy-15b}, Stephen and Yusun prove that this polyhedron satisfies the Hirsch bound in the circuit diameter setting. We detail the proof here, and then consider the more general 4-step conjecture afterwards.

Denote by $U_4$ the polyhedron defined by the system of linear inequalities $\{\vex \in \R^4 : A\vex \geq \veb\}$, $$\text{  where  } A = \begin{pmatrix} -6 & -3 & \ \ 0 & \ \ 1 \cr
-3 & -6 & 1 & 0 \cr
-35 & -45 & 6 & 3 \cr
-45 & -35 & 3 & 6 \cr
1 & 0 & 0 & 0 \cr
0 & 1 & 0 & 0 \cr
0 & 0 & 1 & 0 \cr
0 & 0 & 0 & 1  \end{pmatrix}
\text{ and }
\veb = \begin{pmatrix}
-1 \cr -1 \cr -8 \cr -8 \cr 0 \cr 0 \cr 0 \cr 0
\end{pmatrix}. $$

Its graph is shown in Figure~\ref{U4graph}: here the vertices are indexed by the four facets containing each one, while the points labelled with \texttt{R}'s represent extreme rays. It is clear from the graph that vertices \texttt{V5678} and \texttt{V1234} are at graph distance five apart. 

\begin{figure}[h]
\begin{center}
\includegraphics[width=0.65\textwidth]{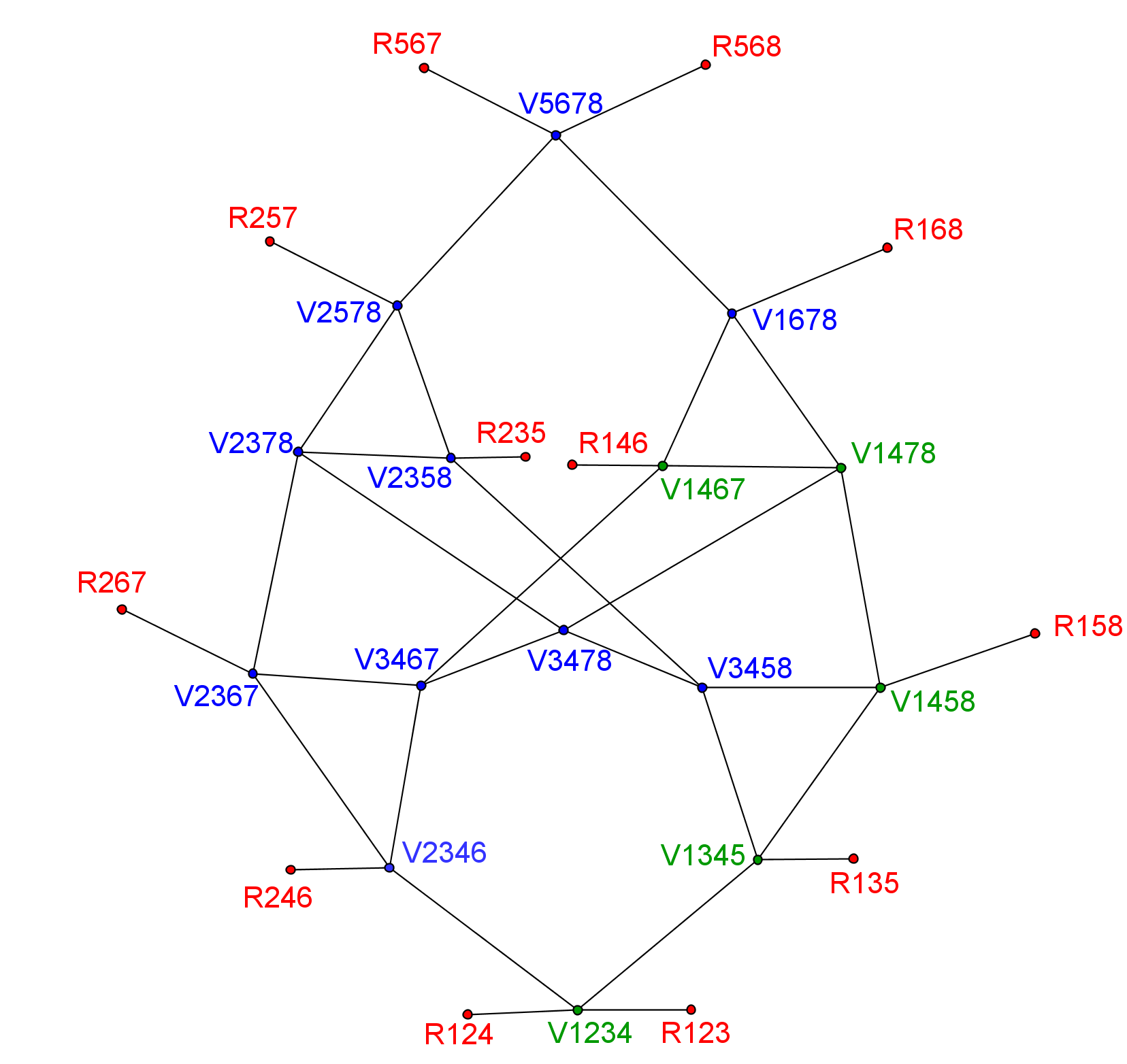}
\caption{The graph of $U_4$.}\label{U4graph}
\end{center}
\end{figure}

The result we need is the following:

\begin{theorem}\label{U4thm}
The circuit diameter of the Klee-Walkup polyhedron $U_4$ is at most 4, independent of realization.
\end{theorem}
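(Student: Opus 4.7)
My plan has three parts: reduce to the single long vertex pair of $U_4$, construct a $4$-step circuit shortcut, and then upgrade the construction to be independent of the realization.

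First I would read off from Figure~\ref{U4graph} that the combinatorial diameter of $U_4$ is exactly $5$, and (up to the $\Z/2$ symmetry of $A$ swapping $x_1\leftrightarrow x_2$ and $x_3\leftrightarrow x_4$) the only pair of vertices at combinatorial distance $5$ is $(V_{5678},V_{1234})=(\mathbf{0},(1,1,8,8))$. Since every edge is a circuit, every other ordered pair of vertices of $U_4$ is already connected by a circuit walk of length at most $4$. So the whole claim reduces to producing a circuit walk of length $4$ from $V_{5678}$ to $V_{1234}$.

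Next, at $V_{5678}=\mathbf{0}$ the four edges are the positive coordinate directions, each leaving exactly one facet from $\{5,6,7,8\}$ and entering exactly one facet from $\{1,2,3,4\}$, so no edge walk of length less than $5$ can exist. I would therefore use a non-edge circuit that leaves several coordinate facets simultaneously. A natural candidate is the circuit $\veg$ lying in the line of intersection of three of the complex facets, say $\{2,3,4\}$: for the given $A$ one computes $\veg=(3,6,45,35)^T$, which lies in $\R^4_{>0}$. A maximal step from $\mathbf{0}$ along $\veg$ is bounded by facet~$1$ alone, so after one circuit step we land in the relative interior of facet~$1$ while simultaneously leaving all of $\{5,6,7,8\}$. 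I would then complete the walk with three further circuit steps inside the facet-$1$ sub-polyhedron (itself a $3$-polytope with $7$ facets), each one picking up one of the remaining complex facets in the order $2,3,4$ and ending at $V_{1234}$.

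The main obstacle will be independence of the realization, since the specific $\veg$, the step length, and the intermediate points all depend on the matrix $A$. To handle this I would first invoke Lemma~\ref{csimple} to pass from any given realization to a mildly perturbed \Csimple\ one without decreasing the circuit diameter, so that each maximal step enters exactly one new facet. Since mild perturbation preserves the face lattice of $U_4$, the circuit along the intersection of facets $\{2,3,4\}$ continues to exist and continues to exit through facet~$1$; the three subsequent steps take place in the facet-$1$ sub-polyhedron, whose combinatorial type is also preserved. The delicate point is that the endpoint of the first step is generally not a vertex, so the $3$-step tail must be controlled from an interior starting point; I would argue this by a finite combinatorial case check in $U_4$, tracking which facets are active along the walk and exploiting that facet~$1$ as a $3$-polytope with $7$ facets satisfies the circuit Hirsch bound $f-d=4$ in each realization. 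Combined, these ingredients give the required $4$-step circuit walk in every realization of $U_4$ and hence Theorem~\ref{U4thm}.
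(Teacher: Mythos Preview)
Your reduction to the ordered pair $(V_{5678},V_{1234})$ is correct in spirit, but the proposed walk has two genuine gaps, and the paper's proof proceeds quite differently.

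\textbf{The first step is not realization-independent.} You compute the circuit $\veg=(3,6,45,35)^T$ along $\{2,3,4\}$ for the specific matrix $A$ and observe it lies in $\R^4_{>0}$, hence is feasible at $V_{5678}=\mathbf{0}$. But an arbitrary realization of $U_4$ is not a mild perturbation of this particular $A$; it is any polyhedron with the same face lattice. In such a realization the inner cone at $V_{5678}$ is the cone bounded by whichever four hyperplanes realize facets $5,6,7,8$, and there is no combinatorial reason the $\{2,3,4\}$-circuit direction must lie in that cone. Invoking Lemma~\ref{csimple} only lets you assume \Csimplicity; it does not let you assume closeness to the given coordinates. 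The paper avoids this by taking two \emph{edge} steps first ($V_{5678}\to V_{1678}\to V_{1478}$), which are valid in every realization, and only then uses a non-edge circuit inside the $2$-face $\{1,4\}$. Feasibility of that circuit at $V_{1478}$ is established by a conic-combination argument (it lies in the cone spanned by an incident edge and an incident ray), which is a purely order-theoretic fact about the $2$-face and hence realization-independent.

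\textbf{The three-step tail is not justified.} After your first step you are at an interior point of facet~$1$, i.e.\ a point with $d'=1$ active facet. Neither the circuit Hirsch bound $f-d=7-3=4$ for the $3$-face (which concerns vertex-to-vertex walks) nor Conjecture~\ref{any start} (which would only give $f-d'=7$) yields the needed bound of~$3$; and a ``finite combinatorial case check'' from a non-vertex start is exactly the hard, realization-dependent part you are trying to avoid. The paper sidesteps this entirely: after the two edge steps it works in a $2$-dimensional face, where the geometry of a single maximal circuit step can be analyzed exhaustively (three possible landing regions, each one step from $V_{1234}$).

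Finally, circuit walks are not reversible, and the $\Z/2$ symmetry you cite fixes both $V_{5678}$ and $V_{1234}$, so it does not give the walk from $V_{1234}$ to $V_{5678}$. The paper treats that direction separately by the symmetric construction through $V_{2346}\to V_{3467}$ in the $\{6,7\}$-face.
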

\begin{proof}
First we demonstrate the existence of a circuit walk of length 4 from \texttt{V5678} to \texttt{V1234}. Observe that we can take two edge steps as follows: \texttt{V5678} $\rightarrow$ \texttt{V1678} $\rightarrow$ \texttt{V1478}. Vertices \texttt{V1478} and \texttt{V1234} are both contained in the 2-face determined by facets \texttt{1} and \texttt{4}, so we can complete the walk on this face. Note that this 2-face is an unbounded polyhedron on six facets. Figure~\ref{U4face14} is a topological illustration of this face, showing the order of the vertices and rays.

\begin{figure}[htbp]
\begin{center}
\includegraphics[width=0.35\textwidth]{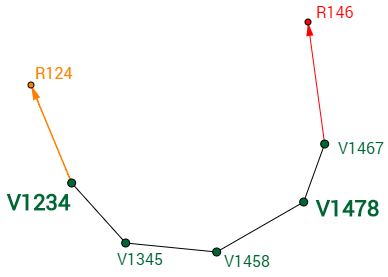}
\caption{The 2-face determined by facets \texttt{1} and \texttt{4}.} \label{U4face14}
\end{center}
\end{figure}

Now consider a vector $\veg$ corresponding to the edge direction from \texttt{V1458} to \texttt{V1345} -- this is the blue vector in Figure~\ref{U4face14_feas}. Note that this is always a circuit direction in any realization of $U_4$ since it corresponds to an actual edge of the polyhedron.

To see that $\veg$ is a feasible direction at \texttt{V1478}, consider vector $\veh$ in the edge direction from \texttt{V1478} to \texttt{V1458}, and vector $\ver$ in the direction of ray \texttt{R124}. Observe that $\veg$ and $-\veh$ are the two incident edge directions at \texttt{V1458}, and so $\ver$ must be a strict conic combination of $\veg$ and $-\veh$, i.e. $\ver = \alpha_1 (\veg) + \alpha_2 (-\veh)$ for $\alpha_1, \alpha_2 > 0$. By rearranging terms we see that $\veg$ is a strict conic combination of $\veh$ and $\ver$: $\veg = (\alpha_2/\alpha_1)\veh + (1/\alpha_1)\ver$, with $\alpha_2/\alpha_1, 1/\alpha_1 > 0$. Feasibility of $\ver$ and $\veh$ at \texttt{V1478} implies that $\veg$ is a feasible direction at \texttt{V1478}.

\begin{figure}[htbp]
\begin{center}
\includegraphics[width=0.4\textwidth]{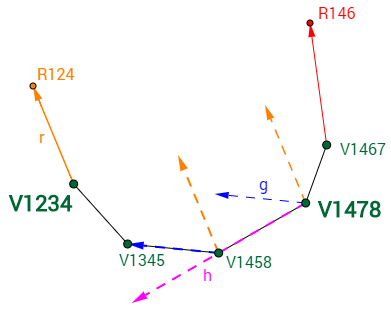}
\caption{Feasibility of the circuit direction $\veg$.} \label{U4face14_feas}
\end{center}
\end{figure}

Now starting at \texttt{V1478} traverse $\veg$ as far as feasibility allows. This direction is bounded since we exit the polyhedron when taking $\veg$ from \texttt{V1458}. We will eventually exit the 2-face at a point along the boundary, and at one of the following positions:

\begin{itemize}
\item exactly at \texttt{V1234},
\item on the edge connecting \texttt{V1234} and \texttt{V1345}, or
\item on the ray \texttt{R124} emanating from \texttt{V1234}.
\end{itemize}

Hitting exactly \texttt{V1234} gives a circuit walk of length 3 from \texttt{V5678}, while the other two cases give circuit walks of length 4 since we only need one step to \texttt{V1234}. These two situations are illustrated in Figure~\ref{U4face14b}.

\begin{figure}[htbp]
\begin{center}
\includegraphics[width=0.75\textwidth]{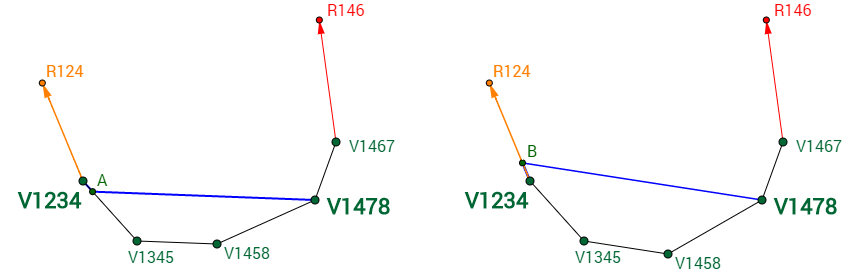}
\caption{Getting from \texttt{V1478} to \texttt{V1234} in at most 2 steps.} \label{U4face14b}
\end{center}
\end{figure}

The argument is the same for the reverse direction (\texttt{V1234} to \texttt{V5678}). We can construct a similar walk by first traversing edges \texttt{V1234} $\rightarrow$ \texttt{V2346} $\rightarrow$ \texttt{V3467}, and then taking a maximal step in the circuit direction arising from the edge connecting \texttt{V1467} and \texttt{V1678}. Here we stay in the 2-face determined by facets \texttt{6} and \texttt{7}. We can then arrive at \texttt{V5678} in at most two steps from \texttt{V3467}. 
\end{proof}

We refer to a polyhedron with $f$ facets in dimension $d$ as an $(f,d)$-polyhedron.
One consequence of Theorem~\ref{U4thm} is the general circuit 4-step conjecture, however we will need the following fact to prove it:

\begin{lemma}\label{U4unique}
Up to isomorphism, $U_4$ is the only non-Hirsch simple (8,4)-polyhedron. 
\end{lemma}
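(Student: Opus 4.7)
The plan is to combine the classical Klee--Walkup bounded Hirsch theorem in low dimension with a combinatorial classification of $4$-dimensional Dantzig figures, reducing the problem to a finite (but nontrivial) enumeration.

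\emph{Step 1: Reduce to the unbounded case.} Klee and Walkup~\cite{kw-67} showed that the bounded $d$-step conjecture holds for $d\leq 5$, and hence that every simple bounded $d$-polytope with at most $2d$ facets has combinatorial diameter at most $d$ for $d\leq 5$. In particular every simple bounded $(8,4)$-polytope satisfies the Hirsch bound of $4$. Therefore any simple non-Hirsch polyhedron with $d=4$ and $f=8$ must be unbounded.

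\emph{Step 2: Reduce to a Dantzig figure.} Let $P$ be simple, unbounded, with $d=4$, $f=8$, and let $u,v$ be vertices with $\dist_\Edges(u,v)\geq 5$. If $u$ and $v$ were both contained in some common facet $F$, then $F$ would be a simple $3$-polyhedron with at most $7$ facets containing two vertices at combinatorial distance at least $5$; this contradicts the $d$-step/Hirsch bound in dimension~$3$, again due to~\cite{kw-67}. So $u$ and $v$ share no facet, and by simplicity the $4$ facets through $u$ and the $4$ facets through $v$ partition the $8$ facets of $P$. Hence $(P,u,v)$ is a $4$-dimensional Dantzig figure (in the unbounded sense) with $\dist_\Edges(u,v)\geq 5$.

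\emph{Step 3: Classify such Dantzig figures.} It remains to show that up to combinatorial isomorphism only one such $P$ exists. Label the facets at $u$ by $\{1,2,3,4\}$ and those at $v$ by $\{5,6,7,8\}$. The four neighbors of $u$ are obtained by exchanging one of $\{1,2,3,4\}$ for one of $\{5,6,7,8\}$, and similarly for $v$; the bipartite pattern of which exchanges are realized, together with the adjacencies among the remaining vertices (and unbounded rays), determines the whole face lattice because $P$ is simple in dimension~$4$. The constraint $\dist_\Edges(u,v)\geq 5$ rules out any $2$-face containing both $u$ and $v$ (handled in Step~2) and also any short alternating path through these exchanges. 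A direct case analysis over the possible bipartite incidences and the resulting $2$-faces, carried out essentially as in~\cite{kw-67}, shows that exactly one combinatorial type survives, namely the type of $U_4$ as depicted in Figure~\ref{U4graph}.

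The main obstacle is the enumeration in Step~3: once the Dantzig structure is fixed, one must rule out every bipartite incidence pattern other than the one realized by $U_4$. This is a finite but delicate combinatorial check; it is precisely the uniqueness statement implicit in the Klee--Walkup construction of the $d=4$ unbounded counterexample, which we would either verify by hand or invoke from~\cite{kw-67}.
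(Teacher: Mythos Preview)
Your Steps~1 and~2 are sound and give a clean reduction to unbounded $4$-dimensional Dantzig figures of diameter at least~$5$. The difficulty is entirely in Step~3, and here your argument has a genuine gap: you do not actually carry out the classification, and the claim that the uniqueness is ``implicit in'' or can be ``invoked from''~\cite{kw-67} is not supported. Klee and Walkup \emph{construct} $U_4$ and prove the equivalences, but they do not, to my knowledge, prove that $U_4$ is the unique such figure. So as written, Step~3 is an assertion rather than a proof, and the enumeration you sketch (over all bipartite incidence patterns compatible with the Dantzig structure and the distance constraint) would need to be carried out in full, including ruling out non-polytopal spheres.

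The paper's proof avoids this enumeration by a different reduction. Instead of classifying unbounded Dantzig figures directly, it truncates the unbounded polyhedron at infinity to obtain a \emph{bounded} simple $(9,4)$-polytope $Q$, which is then Hirsch-sharp with diameter~$5$. The uniqueness of $Q_4$ among simple Hirsch-sharp $(9,4)$-polytopes is stated by Klee and Kleinschmidt~\cite{kk-87} as a consequence of the complete enumeration of polytopal simplicial $3$-spheres with nine vertices by Altshuler, Bokowski, and Steinberg~\cite{abs-80}. One then recovers $P$ from $Q\cong Q_4$ by sending the ninth facet back to infinity. The advantage of this route is that the hard combinatorial classification is already in the literature and can be cited directly; your approach, by contrast, would require either reproducing a comparable enumeration by hand or finding a reference that actually contains it.
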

\begin{proof}
The simple bounded (8,4)-polytopes are enumerated in Gr\"unbaum and Sreedharan~\cite{gs-67}.
A simple unbounded (8,4)-polyhedron can be truncated with an additional facet that cuts off
the vertex at infinity, this produces a simplicial (9,4)-polytope.  Observe that the diameter
of this polytope will remain at least 5, as any route between \texttt{V1234} and \texttt{V5678}
through the new facet will have to add the new ninth facet along with facets 5, 6, 7 and 8.
Klee and Kleinschmidt in \cite{kk-87} mention that there is a unique simple polytope with 
$d = 4$, $f = 9$ and diameter 5, following directly from the complete enumeration of all polytopal simplicial
3-spheres, completed by Altshuler et al.~\cite{abs-80}.  Thus this must be exactly that polytope,
which we denote by $Q_4$. 
The result follows, as any non-Hirsch 4-polyhedron with 8 facets comes from $Q_4$ by projecting to infinity the ninth facet that does not contain either of the two vertices at distance 5.
\end{proof}


{
\renewcommand{\thetheorem}{\ref{thm:4step}}
\begin{theorem}[Circuit 4-step]
$\diamfd_\Circuits(8,4) = 4.$
\end{theorem}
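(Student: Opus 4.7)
The plan is to prove both bounds separately. For the lower bound $\diamfd_\Circuits(8,4) \geq 4$, I would exhibit the $4$-cube $[0,1]^4$: with no equality constraints and inequality matrix $A=\bigl(\begin{smallmatrix} I \\ -I\end{smallmatrix}\bigr)$, the support-minimal vectors in $A\R^4$ are precisely $\pm \vece_i$ for $i=1,\ldots,4$, so the circuits are exactly the coordinate directions. Every circuit step therefore alters a single coordinate, and reaching $(1,1,1,1)$ from $(0,0,0,0)$ requires at least four steps.

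For the upper bound $\diamfd_\Circuits(8,4)\leq 4$, I would first apply Corollary~\ref{Csimple-reduce} to reduce to a \Csimple\ (and in particular simple) $4$-polyhedron $P$ with $8$ facets without decreasing the circuit diameter. I would then split on the combinatorial diameter of $P$. If $\diamp_\Edges(P)\leq 4$, then since edge walks are a special type of circuit walks, $\diamp_\Circuits(P)\leq\diamp_\Edges(P)\leq 4$ and we are done. Otherwise $\diamp_\Edges(P)\geq 5$, meaning $P$ violates the combinatorial Hirsch bound $f-d=4$; Lemma~\ref{U4unique} then forces $P$ to be combinatorially isomorphic to the Klee-Walkup polyhedron $U_4$, and Theorem~\ref{U4thm}, which is explicitly stated to hold independently of the realization, delivers $\diamp_\Circuits(P)\leq 4$. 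Combining the two cases with the lower bound yields the theorem.

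The main difficulty I foresee is ensuring that the "independent of realization" clause of Theorem~\ref{U4thm} really covers every \Csimple\ realization of $U_4$ arising from the reduction. The proof of Theorem~\ref{U4thm} singles out a specific edge-direction circuit inside the $2$-face determined by facets \texttt{1} and \texttt{4} and establishes its feasibility via a strict conic combination of an incident edge and an extreme ray -- properties that depend only on the incidence structure of $U_4$ rather than on the particular defining matrix, so the argument should transfer to any realization. Should a delicate geometric condition emerge nonetheless, the mild-perturbation flexibility from Lemma~\ref{csimple} provides a safety net: one can perturb $P$ without decreasing its circuit diameter and thereby reach a realization on which the combinatorial argument applies cleanly.
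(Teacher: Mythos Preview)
Your proposal is correct and follows essentially the same route as the paper's first proof of Theorem~\ref{thm:4step}: reduce to a \Csimple\ polyhedron, dispose of the Hirsch-satisfying case directly, and invoke Lemma~\ref{U4unique} together with Theorem~\ref{U4thm} for the remaining case.

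Two small differences are worth noting. First, the paper splits on bounded versus unbounded (citing \cite{sst-12} for the fact that bounded $(8,4)$-polytopes have combinatorial diameter at most $4$), whereas you split directly on whether $\diamp_\Edges(P)\leq 4$; your split is slightly more economical since it bypasses the external citation and lets Lemma~\ref{U4unique} do all the work. Second, you supply the lower bound via the $4$-cube, which the paper's proof leaves implicit; your circuit computation for the cube is correct and completes the equality cleanly. Your concern about ``independent of realization'' is well placed but already handled: the feasibility argument in Theorem~\ref{U4thm} uses only the incidence structure of the $2$-face (edges and rays), so it transfers to any realization, \Csimple\ or not.
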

\addtocounter{theorem}{-1}
}

\begin{proof}
By Lemma~\ref{Csimple}, it suffices to consider \Csimple\ polyhedra -- let $P$ be a 4-dimensional polyhedron with 8 facets. If $P$ is bounded then it has combinatorial diameter 4 \cite{sst-12}, so suppose $P$ is unbounded. By Lemma~\ref{U4unique}, $P$ is combinatorially equivalent to $U_4$, and by Theorem~\ref{U4thm} it has circuit diameter 4.
\end{proof}

We remark that in fact the circuit walks produced between the two vertices at distance 5 are non-revisting, so in fact this argument shows
that there are non-revisiting circuit walks between any pair of vertices of an (8,4)-polyhedron.


\subsection{Proof via facial paths in 4-prismatoids}

Here we present a second proof of Theorem~\ref{thm:4step}. Recall that a {\em spindle} is a polytope with two distinguished vertices $\vex$ and $\vey$ such that each facet is incident to exactly one
of $\vex$ and $\vey$. Polar to this, a {\em prismatoid} is a polytope with two distinguished facets $Q^+$ and $Q^-$ (called its {\em bases}) that together contain all the vertices of the polytope. The
{\em length} of a spindle is the graph distance between the two special vertices, while the {\em width} of a prismatoid is the dual graph distance between the two bases.
These constructions were essential in finding counterexamples to the combinatorial Hirsch conjecture \cite{s-11}.
In \cite{sst-12} Santos et al.~prove that 4-dimensional prismatoids have width at most 4. We strengthen this
result here to get a non-revisiting path and use it to show $\diamfd_\Circuits(8,4) = 4$.

\begin{lemma}\label{4prismatoid}
In a 4-prismatoid with parallel faces $Q^+$ and $Q^-$, there exists a facial path from $Q^+$ to $Q^-$ such that at each step at least one new vertex of $Q^-$ is encountered.
\end{lemma}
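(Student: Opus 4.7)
The plan is to adapt the proof of Santos, Santos, and Thomas~\cite{sst-12} establishing that $4$-prismatoids have width at most $4$, while carefully tracking new vertices of $Q^-$ at each step. Place $Q^\pm$ in parallel hyperplanes $H^\pm$ and let $\pi : P \to H^-$ be the orthogonal projection so that $\pi(Q^+)$ and $Q^-$ are two $3$-polytopes in the common hyperplane $H^-$. Their overlay produces a refined cell complex whose full-dimensional cells are in bijection with the non-base facets of $P$, with two cells adjacent across a $2$-face of the overlay exactly when the corresponding non-base facets of $P$ share a $2$-face. A facial path from $Q^+$ to $Q^-$ in $P$ then corresponds to a chain of cells connecting the interior of $\pi(Q^+)$ to the interior of $Q^-$.

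A crucial observation is that every non-base facet $F$ of $P$ contains at least one vertex of $Q^-$: if not, all vertices of $F$ would lie in $\mathrm{vert}(Q^+)\subset H^+$, forcing $F\subset H^+$ and hence $F=Q^+$ by dimension. So the first non-base facet along any facial path already encounters a vertex of $Q^-$. For the intermediate steps I would proceed greedily: from $F_i$ choose an adjacent non-base facet $F_{i+1}$ (along a common $2$-face) that contains a vertex of $Q^-$ not in $F_0\cup\cdots\cup F_i$; the final step into $Q^-$ sweeps up any remaining vertices.

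The main obstacle is to show this greedy choice is always possible within the length-$4$ bound of Santos et al. For this I plan to revisit their argument, which constructs the path by following a generic line $\ell\subset H^-$ from the interior of $\pi(Q^+)$ to the interior of $Q^-$ and reading off the cells crossed by $\ell$. A crossing of $\ell$ through an overlay $2$-face coming from $Q^-$ manifestly introduces a new vertex of $Q^-$ incident to that $2$-face into the just-entered cell. For crossings coming from $\pi(Q^+)$, I plan to argue by contradiction: among all minimum-length facial paths arising this way, pick one that maximizes the number of $Q^-$-vertices picked up along the way. If some step $F_i\to F_{i+1}$ failed to introduce a new $Q^-$-vertex, then all $Q^-$-vertices of $F_{i+1}$ would already lie in $F_i$; a local rerouting in the overlay (perturbing $\ell$ past the offending $\pi(Q^+)$-face) then either shortens the path or strictly increases the vertex count, contradicting the extremal choice. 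Combining these cases yields the required facial path of length at most $4$ in which every step encounters a fresh vertex of $Q^-$.
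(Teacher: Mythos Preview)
Your proposal has genuine gaps in both the setup and the main argument. The overlay of $\pi(Q^+)$ and $Q^-$ in $H^-$ does not yield a cell complex whose $3$-cells biject with the non-base facets of $P$; Santos et al.\ work instead with the pair of geodesic maps $(G^+,G^-)$ on a $2$-sphere coming from the \emph{normal fans} of $Q^\pm$, and their argument does not follow a generic line but locates a \emph{crossing vertex} of the common refinement adjacent to both a positive and a negative vertex. More seriously, your extremal rerouting step---``perturbing $\ell$ past the offending $\pi(Q^+)$-face either shortens the path or strictly increases the vertex count''---is asserted without any mechanism; nothing in your outline forces this dichotomy, and without it the greedy plan is just a hope.

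The paper's proof is direct and avoids these issues by splitting on the width. For width $\le 3$ a simple count shows the number of $Q^-$-vertices is strictly increasing along the path. For width $4$, the key structural input from~\cite{sst-12} is that the path $Q^+\to F_1\to F_2\to F_3\to Q^-$ produced by the crossing-vertex argument has $F_2$ a \emph{tetrahedron} with exactly two vertices in each base. If the $Q^-$-vertices of $F_2$ are $z,w$ with $z$ on the ridge $F_1\cap F_2$, then $F_1$ cannot contain $w$ (else $F_2\subset F_1$), so $w$ is new at that step; the step $F_2\to F_3$ is immediate since $F_3$ has at least three $Q^-$-vertices while $F_2$ has only two. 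Note also that ``new'' here means new relative to the \emph{previous} facet only, not the entire history as you assume; this weaker statement is exactly what dualizes to the spindle corollary actually used downstream.
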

\begin{proof}
Suppose a 4-prismatoid $Q$ is given, with bases $Q^+$ and $Q^-$.
If $Q$ has width 2 then there is a facet of $Q$ that is adjacent to both bases.
The claim then follows as the number of vertices of $Q^-$ incident to each facet in the facial path is strictly increasing.

If $Q$ has width 3, suppose the facial path of length 3 is $Q^+\rightarrow F \rightarrow G \rightarrow Q^-$.
Then $F$ must have at most 2 vertices from $Q^-$ -- any more and it would itself be adjacent to $Q^-$, and there would be a shorter path between the bases.
Also, $G$ must have at least 3 vertices in common with $Q^-$ to be adjacent to it.
Hence the number of vertices of $Q^-$ incident to each facet in this facial path is also strictly increasing.

Suppose now that $Q$ has width 4.
Santos et al. prove in \cite{sst-12} that there is a facial path of length 4 between the bases, say $Q^- \rightarrow F_1 \rightarrow F_2 \rightarrow F_3 \rightarrow Q^+$, such that $F_2$ is a tetrahedron with $|V(Q^-) \cap V(F_2)| = 2$ and $|V(Q^+) \cap V(F_2)| = 2$.
That is, $F_2$ has two vertices on $Q^-$ and two vertices on $Q^+$.
Otherwise, if $F_2$ were incident to more than two vertices of say, $Q^+$, then it would be adjacent to $Q^+$ and we would have a shorter facial path between the bases, contradicting the assumption that $Q$ has width 4.
Denote by $\vex$ and $\vey$ ($\vez$ and $\vew$) the two vertices of $Q^+$ ($Q^-$) on $F_2$.

Let us now consider each step of the path.
The first step $Q^+ \rightarrow F_1$ and the last step $F_3 \rightarrow Q^-$ clearly satisfy the condition we require.
Moreover, going from $F_2$ to $F_3$, the number of vertices on $Q^-$ increases from 2 to at least 3 -- a strict increase as well.

As for the step from $F_1$ to $F_2$, the crucial observation is that the triangle of $F_2$ that is incident to $F_1$ contains two vertices of $Q^+$ ($\vex$ and $\vey$) and one of $Q^-$ (assume without loss of generality that it is $\vez$).
This means that $F_1$ cannot contain $\vew$ as well, or else $F_1$ would contain $F_2$ entirely.
Therefore $\vew$ is the new vertex of $Q^-$ seen when moving from $F_1$ to $F_2$.

\end{proof}

Note that although similar, the notion of seeing a new vertex of $Q^-$ in each step is not the same as the interpretation of non-revisiting paths discussed in Section~\ref{sec:nonrevisiting}. Here we do not require that the new vertex at each step be new with respect to the entire path, just that it is a new vertex of $Q^-$ at that step. 

Figure~\ref{fig:prismatoid} illustrates a facial path from the outer facet to the inner facet in a 4-prismatoid, using a partial Schlegel diagram; observe that this is a \emph{revisiting} path since the vertex $F$ is left at the second step and then seen again in the last step.
However this path still satisfies the condition we need, that at least one new vertex of $Q^-$ is seen \emph{at each step}: $E$ and $F$ for step 1, $D$ for step 2, $A$ for step 3, and $B$, $C$, and $F$ for step 4 (we still list $F$ here because it is a vertex of $Q^-$ but not of $F_3$, although it has already appeared in the walk before).

\begin{figure}[htbp]
\begin{center}
	\includegraphics[width=0.4\textwidth]{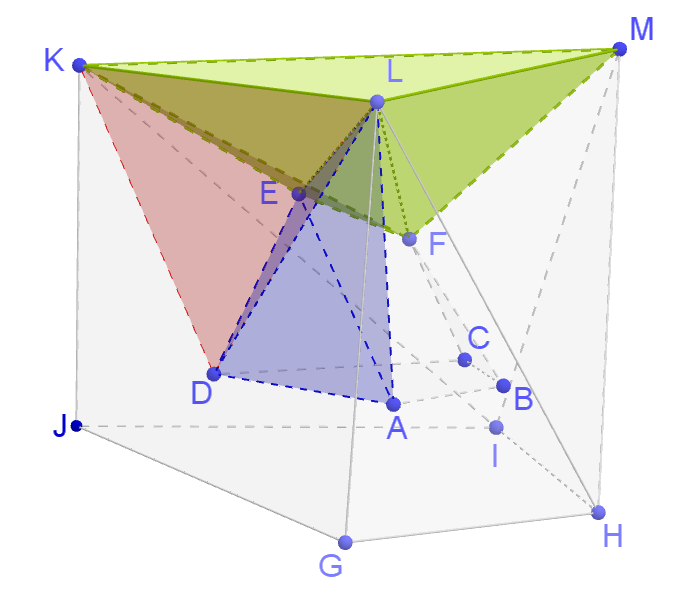}
	\caption[A revisiting facial path in a 4-prismatoid]{A revisiting facial path in a 4-prismatoid. The facet $F$ is left after the first step but is entered again in the last step.}\label{fig:prismatoid}
	\end{center}
\end{figure}

Now we use the polar result for spindles, which in turn implies the next result:

\begin{corollary}\label{4spindle}
Let $P(\veu,\vev)=P(\veu)\cap P(\vev)\subset \R^4$ be a bounded spindle coming from the intersection of two cones $P(\veu), P(\vev)$ at $\veu$, respectively $\vev$.
Then there is an edge walk from $\veu$ to $\vev$ such that in each step, a new facet of $P(\vev)$ becomes active.
\end{corollary}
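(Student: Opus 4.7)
The plan is to obtain Corollary \ref{4spindle} as the polar-dual of Lemma \ref{4prismatoid}. Under polarity with respect to a suitably chosen center, a bounded 4-spindle with distinguished vertices $u,v$ becomes a 4-prismatoid with parallel bases; an edge walk from $u$ to $v$ becomes a facial path from one base to the other; and facets of $P(v)$ correspond to vertices of the base polar to $v$. With this dictionary in hand, the statement of the corollary is exactly the polar-dual of what Lemma \ref{4prismatoid} already provides.

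First I would fix a polarization center $c$ in the relative interior of the segment $[u,v]$ and form the polar $Q := (P(u,v)-c)^{\circ}$. Since $u$ and $v$ do not share a facet of $P(u,v)$, no single facet contains the segment $[u,v]$, so its interior lies in the interior of $P(u,v)$; hence $c$ is interior to $P(u,v)$ and $Q$ is a bounded 4-polytope. The vertices $u$ and $v$ dualize to facets $Q^{+}$ and $Q^{-}$ of $Q$, and because $c$ lies on the line through $u$ and $v$, their outward normals in $Q$ are antiparallel, so $Q^{+}$ and $Q^{-}$ are parallel. The spindle property that each facet of $P(u,v)$ is incident to exactly one of $u,v$ dualizes to: each vertex of $Q$ lies on exactly one of $Q^{+}, Q^{-}$. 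Hence $Q$ is a 4-prismatoid with parallel bases, and Lemma \ref{4prismatoid} applies.

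Applying Lemma \ref{4prismatoid} to $Q$ produces a facial path $Q^{+} \to F_1 \to \cdots \to F_k \to Q^{-}$ such that at each step at least one new vertex of $Q^{-}$ is incident to the current facet. Dualizing back, each facet $F_i$ of $Q$ corresponds to a vertex $w_i$ of $P(u,v)$, and the ridge-adjacency of consecutive facets in the facial path becomes the edge-adjacency of consecutive vertices, yielding an edge walk $u \to w_1 \to \cdots \to w_k \to v$ in $P(u,v)$.

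The one technical point to verify carefully — and in my view the main obstacle — is the translation of incidence under polarity: a vertex $x$ of $Q^{-}$ incident to facet $F_i$ of $Q$ corresponds under polarity to a facet $F^{x}$ of $P(u,v)$ that contains $v$ (hence a facet of the cone $P(v)$) and is active at the vertex $w_i$. Consequently, the ``new vertex of $Q^{-}$ encountered at each step'' property of the facial path translates exactly to ``a new facet of $P(v)$ becomes active at each step'' for the edge walk $u \to w_1 \to \cdots \to w_k \to v$, which is the claim.
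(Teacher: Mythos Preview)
Your proposal is correct and is exactly the approach the paper takes: the paper simply states that Corollary~\ref{4spindle} is the polar result for spindles of Lemma~\ref{4prismatoid} without spelling out the details, and you have filled in precisely the polarity dictionary (choosing $c$ on the segment $[u,v]$ to get parallel bases, translating facet--vertex and ridge--edge incidences) that makes this work.
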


\begin{lemma}\label{lem:4stepsunbounded}
Let $P(\veu,\vev)=P(\veu)\cap P(\vev)\subset \R^4$ be an unbounded spindle coming from the intersection of two cones $P(\veu), P(\vev)$ at $u$, respectively $\vev$.
Let further $P(\vev)$ be simple.
Then there is a circuit walk of length at most $4$ from $\veu$ to $\vev$. 
\end{lemma}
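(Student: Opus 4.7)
The plan is to reduce to the bounded case and then apply Corollary \ref{4spindle}. First, I would apply Lemma \ref{lem:makebounded} to the unbounded spindle $P(u,v)$: since $d=4$, this adds at most $d-1 = 3$ opposite facets incident to $u$, producing a bounded polytope $P'(u,v) \subset P(u,v)$ with $\mathcal{C}(P(u,v)) = \mathcal{C}(P'(u,v))$, with $u$ and $v$ still as vertices. Because the added facets are all incident to $u$ and opposite to some facet of $P(v)$, the polytope $P'(u,v)$ remains a spindle with distinguished vertices $u, v$, and the cone $P(v)$ at $v$ is unchanged; in particular it is still simple.

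Next, I would apply Corollary \ref{4spindle} to the bounded 4-dimensional spindle $P'(u,v)$. This yields an edge walk $u = y_0, y_1, \ldots, y_k = v$ in $P'(u,v)$ such that a new facet of $P(v)$ becomes active at each step. Since $v$ is a simple vertex of the cone $P(v)$ in $\R^4$, it is incident to exactly $4$ facets of $P(v)$; starting from $u$ where none of these are active and gaining at least one new $P(v)$-facet per step forces $k \leq 4$.

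The final step is to lift this walk back to $P(u,v)$. Each direction $\veg^{(i)} = y_{i+1} - y_i$ is an edge direction of $P'(u,v)$, hence a circuit, and by Lemma \ref{lem:makebounded} it is also a circuit of $P(u,v)$. Feasibility of every $y_i$ in $P(u,v)$ follows from $P'(u,v) \subseteq P(u,v)$. The remaining claim is that each step still has maximal step length in $P(u,v)$. This is where the ``new $P(v)$-facet'' property is essential: the endpoint $y_{i+1}$ lies on a new facet $F_i$ of $P(v)$, and $F_i$ is a facet of both $P'(u,v)$ and $P(u,v)$. Therefore $F_i$ blocks any extension of the step in direction $\veg^{(i)}$ in $P(u,v)$ just as it does in $P'(u,v)$, so the maximal-length circuit step from $y_i$ in $P(u,v)$ ends exactly at $y_{i+1}$. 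Hence the sequence $y_0, \ldots, y_k$ is a valid circuit walk of length at most $4$ in $P(u,v)$.

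The main obstacle I anticipate is verifying the maximality of step length in the lift rigorously: a priori, the step in $P(u,v)$ could be strictly longer than in $P'(u,v)$ because $P(u,v)$ has fewer bounding facets. The ``enters a new $P(v)$-facet'' guarantee from Corollary \ref{4spindle} is what rules this out, since it produces a facet common to both polyhedra that is binding at the endpoint. A secondary detail is confirming that $P'(u,v)$ indeed qualifies as a spindle in the sense required by Corollary \ref{4spindle} (each facet incident to exactly one of $u, v$); this follows directly from the construction in Lemma \ref{lem:makebounded}, since every added facet is incident to $u$ and every original $v$-facet remains incident only to $v$.
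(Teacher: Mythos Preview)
Your approach is the same as the paper's: bound the spindle by adding opposite facets at $u$, apply Corollary~\ref{4spindle} to the bounded spindle, and lift the walk back using that each step is blocked by an original $P(v)$-facet. The paper adds all four opposite facets directly rather than invoking Lemma~\ref{lem:makebounded}, but this is cosmetic.

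One point to correct: your justification that $k\le 4$ assumes that ``new facet of $P(v)$'' in Corollary~\ref{4spindle} means \emph{globally} new along the walk, so that counting distinct $P(v)$-facets bounds the length. The paper is explicit (see the remark after Lemma~\ref{4prismatoid}) that ``new'' only means new relative to the previous step, not to the entire path; so your counting argument does not go through as written. The length bound $k\le 4$ instead comes from the fact that the facial path in Lemma~\ref{4prismatoid} realizes the prismatoid width, which is at most $4$ by~\cite{sst-12}; the paper's proof simply invokes this directly. Your lifting argument is unaffected, since it only needs that \emph{some} $P(v)$-facet is binding at each step, which the weaker ``new at this step'' reading already provides.
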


\begin{proof}
Let $\vea_1,\dots,\vea_4$ be the outer normals of facets $F_1,\dots,F_4$ incident to $\vev$.
Let further $Q_i=\{\vex\in \R^d: (-\vea_i)^T\vex\leq (-\vea_i)^T\veu\}$, informally an opposite halfspace of the one created by $F_i$, but now moved to be incident to $\veu_i$.
Set $P'(\veu)=P(\veu)\cap Q$ with $Q=\bigcap\limits_{i=1}^4 Q_i$ and $P'(\veu,v)=P'(\veu)\cap P(\vev)$.
Clearly, $P'(\veu,\vev)\subset P(\veu,\vev)$ is a bounded spindle with simple cone $P(\vev)$ so that we may apply Corollary~\ref{4spindle}.

Thus there is a circuit walk from $\veu$ to $\vev$ in $P'(\veu,\vev)$ of length at most $4$ such that in each step at least one of the facets of $P(\vev)$ becomes active.
This means that the `extra' facets introduced as $Q$ never are the only facets to bound the step length.
Combining this fact with $P'(\veu,\vev)\subset P(\veu,\vev)$, we see that  the circuit walk from $\veu$ to $\vev$ in $P'(\veu,\vev)$ of length at most $f$ is a circuit walk in $P(\veu,\vev)$, as well. This proves the claim.
\end{proof}

The bounded case $\Deltac(8,4) \leq 4$ already follows from the combinatorial diameter bound $\Deltae(8,4) \leq 4$; hence Lemma~\ref{lem:4stepsunbounded} takes care of the only other possible bad case -- when the $(8,4)$-polyhedron is unbounded.
So, given an unbounded $(8,4)$-polyhedron, we take two of its vertices $\veu$ and $\vev$, which we can assume to have no facets in common (otherwise we reduce to the $3$- or fewer-dimensional case.)
Then an application of Lemma~\ref{lem:4stepsunbounded} gives Theorem~\ref{thm:4step}.



\section{Discussion}\label{sec:discussion}

The (combinatorial) polynomial Hirsch conjecture remains a fundamental question.
We consider a diameter question where we make the natural relaxation of edge walks to circuit walks. 
In this setting, the Hirsch bound of $f-d$ is again a possibility.  We recover part of the Klee-Walkup equivalences
for the Hirsch bound holding in this setting. In particular, we show that the non-revisiting and $d$-step statements for circuit walks imply the circuit analogue of the Hirsch conjecture. 
We find an obstacle to using the wedge construction for circuits. Due to this, we prove some implications through the construction of a suitable unbounded polyhedron. Two
questions remain open in this context: Does the circuit diameter conjecture (Conjecture \ref{original}) imply the non-revisiting conjecture (Conjecture \ref{non revisiting})? And is it possible to
reduce to the $(2d,d)$ case without the construction of an unbounded polyhedron? The latter would allow such reductions for the bounded versions of the circuit non-revisiting and circuit Hirsch
conjectures.

Further, we show that the unbounded $4$-step conjecture holds for the circuit diameter, which fails for the combinatorial version. Presently we do not see a clear path to resolve the general $d$-step conjecture, though the circuit 5-step conjecture may be approachable via either of the approaches that worked for 4-step. An enumerative approach would argue that any simple (10,5)-polyhedra are obtained fairly
directly from $U_4$, allowing us to leverage the construction in this paper.
While this seems likely, see for example the discussion in Kim and Santos~\cite{ks-10},
it may require substantial enumeration to prove.
The discussion in Firsching's thesis~\cite{f-15} outlines the state-of-the-art for enumeration.
Alternatively, it could be resolved by proving that $5$-dimensional spindles with $9$ facets on 
one side and $5$ on the other side satisfy the non-revisiting conjecture.

It is interesting to consider \emph{circuit Hirsch-sharp} polyhedra, i.e.~polyhedra
that meet the $f-d$ bound.  The combinatorial Hirsch-sharp polytopes were intensively 
studied~\cite{fh-99,hk-98,hk-98b} before the demise of the bounded Hirsch conjecture.
These include {\em trivial} Hirsch-sharp polytopes (where $f \leq 2d$) like the $d$-cube and the $d$-simplex, and {\em non-trivial} Hirsch-sharp polytopes (where $f > 2d$), which include the polytope $Q_4$, and others obtained by performing operations on $Q_4$ (see \cite{ks-10}). In the circuit setting, the $d$-simplex remains Hirsch-sharp
independent of realization since $f-d=1$.  A regular $d$-cube is also Hirsch-sharp, but it is not obvious whether this remains true for non-regular realizations. Similarly, it is open whether there is a collapsing realization of $U_4$, i.e. one with a diameter of less than $4$. 

While almost all realizations of $U_4$ are Hirsch-sharp, it is difficult to determine if there exists a Hirsch-sharp realization of $Q_4$.
Surprisingly, if there is a Hirsch-sharp realization of $Q_4$, it will be sharp in only one direction: if $\veu$ and $\vev$ are the vertices at combinatorial distance 5, it is always possible to find a circuit walk of length $4$ from \emph{either} $\veu$ to $\vev$ or from $\vev$ to $\veu$~\cite{y-17}.
However, this does not transfer to the opposite direction -- note that reversing a (maximal) circuit walk does not give a circuit walk. 





\section*{Acknowledgments}
This research was partially supported by an NSERC Discovery Grant for T.~Stephen, and an NSERC Postgraduate Scholarship-D for T.~Yusun. All illustrations were produced using the GeoGebra
software.\footnote{http://www.geogebra.org, International GeoGebra Institute.}
We thank J.~de Loera and E.~Finhold for comments and encouragement.

\bibliographystyle{amsalpha}
\bibliography{literature}

\end{document}